\providecommand{\U}[1]{\protect\rule{.1in}{.1in}}
\newcolumntype{C}[1]{>{\centering\let\newline\\\arraybackslash\hspace{0pt}}m{#1}}
\providecommand{\U}[1]{\protect\rule{.1in}{.1in}}
\newtheorem{theorem}{Theorem}
\newtheorem{coro}[theorem]{Corollary}
\newtheorem{lemma}[theorem]{Lemma}
\newenvironment{proof}[1][Proof]{\noindent\textbf{#1.} }{\ \hfill \rule{0.5em}{0.5em}}
\newcounter{example}
\providecommand{\rad}{\operatorname{rad}}
\providecommand{\gid}{\gamma^{ID}}
\providecommand{\ggid}{G(\gamma^{ID})}
\providecommand{\leqn}{1 \leq i \leq n}
\providecommand{\gpr}{\gamma_{pr}}
\providecommand{\gt}{\gamma_{t}}
\providecommand{\bull}{\mathcal{B}}
\providecommand{\ir}{\operatorname{ir}}
\definecolor{lgray}{gray}{0.95}
\definecolor{mgray}{gray}{0.40}
\tikzstyle{std}=[ circle, draw=black,fill=black,thick, inner sep=2pt, minimum size=2.5mm]
\tikzstyle{wstd}=[ circle, draw=black,fill=white,thick, inner sep=2pt, minimum size=2.5mm]
\tikzstyle{ir}=[ circle, draw=black,fill=green,thick,  inner sep=2pt, minimum size=2mm]
\tikzstyle{mp}=[circle, draw=black,fill=Dandelion,thick,  inner sep=2pt, minimum size=2mm]
\tikzstyle{bred}=[circle, draw=black,fill=red,thick,  inner sep=2pt, minimum size=2mm]
\tikzstyle{smred}=[ circle, draw=black,fill=red,thick,  inner sep=1pt, minimum size=1.5mm]
\tikzstyle{bblue}=[ circle, draw=black,fill=blue,thick,  inner sep=1pt, minimum size=1.5mm]
\tikzstyle{regRed}=[ circle, draw=black,fill=red,thick,  inner sep=2pt, minimum size=2.5mm]
\tikzstyle{sqRed}=[ rectangle, draw=black,fill=red,thick,  inner sep=2pt, minimum size=2.5mm]
\tikzstyle{tr}=[color=black, style=dotted]
\tikzstyle{sp}=[color=ProcessBlue, line width = 2pt]
\begin{document}

\title{\textbf{A note on some variations of the $\gamma$-graph}}
\author{C. M. Mynhardt\thanks{Supported by the Natural Sciences and Engineering
Research Council of Canada.} {\ }and L. E. Teshima\\Department of Mathematics and Statistics\\University of Victoria, P. O. Box 3045, Victoria, BC\\\textsc{Canada} V8W 3P4\\{\small kieka@uvic.ca, lteshima@uvic.ca}}
\maketitle

\begin{abstract}
For a graph $G$, the \emph{$\gamma$-graph of $G$}, $G(\gamma)$, is the graph
whose vertices correspond to the minimum dominating sets of $G$, and where two
vertices of $G(\gamma)$ are adjacent if and only if their corresponding
dominating sets in $G$ differ by exactly two adjacent vertices. In this paper,
we present several variations of the $\gamma$-graph including those using
identifying codes, locating-domination, total-domination, paired-domination,
and the upper-domination number. For each, we show that for any graph $H$,
there exist infinitely many graphs whose $\gamma$-graph variant is isomorphic
to $H$.

\end{abstract}

\vspace{-5mm}

\begin{center}
\emph{In memory of Peter Slater, September 30, 1946 - September 27, 2016.}
\end{center}

\noindent\textbf{Keywords:\hspace{0.1in}} $\gamma$-Graphs, Domination,
Reconfiguration problems

\noindent\textbf{AMS Subject Classification 2010:\hspace{0.1in}} 05C69



Given a graph $G=(V,E)$, a set $S \subseteq V$ is said to be a
\emph{dominating set} of $G$ if for each $v\in V$, $v$ is either in $S$ or
adjacent to a vertex in $S$. The minimum cardinality of a dominating set is
the \emph{dominating number} $\gamma(G)$, and a set is a $\gamma$-\emph{set}
if it is dominating and has cardinality $\gamma(G)$. The \emph{private
neighbourhood} of a vertex $v$ with respect to a vertex set $S$ is the set
$pn[v,S] = N[v]-N[S-\{v\}]$; therefore, a dominating set $S$ is minimal
dominating if for each $u \in S$, $pn[u,S]$ is nonempty. A set $S\subseteq
V(G)$ is \emph{irredundant} if $\operatorname{pn}(v,S)\neq\varnothing$ for
each $v\in S$, and \emph{maximal irredundant }if $S$ is irredundant but no
proper superset of $S$ is irredundant. The \emph{irredundance number
}$\operatorname{ir}(G)$ is the minimum cardinality of a maximal irredundant
set of $G$. For a review of domination principles, see \cite{HHS1,HHS2}. In
general, we follow the notation of \cite{CL}.

First defined by Fricke et al. in 2011 \cite{FHHH11}, the \emph{$\gamma$-graph
of a graph $G$} is the graph $G(\gamma) = (V(G(\gamma)),E(G(\gamma)))$, where
each vertex $v\in V(G(\gamma))$ corresponds to a $\gamma$-set $S_{v}$ of $G$.
The vertices $u$ and $v$ in $G(\gamma)$ are adjacent if and only if there
exist vertices $u^{\prime}$ and $v^{\prime}$ in $G$ such that $u^{\prime
}v^{\prime}\in E(G)$ and $S_{v} = (S_{u} - u^{\prime})\cup\{v^{\prime}\}$.
This model of adjacency is referred to as the \emph{slide adjacency} or
sometimes simply as the \emph{slide-model}. For additional results on $\gamma
$-graphs, see \cite{B15, CHH10, EdThesis, FHHH11}.

An initial question of Fricke et al. was to determine exactly which graphs are
$\gamma$-graphs \cite{FHHH11}; they showed that every tree is the $\gamma
$-graph of some graph, and further conjectured that every graph is the
$\gamma$-graph of some graph. Later that year, Connelly et al. \cite{CHH10}
proved this conjecture to be true.

\begin{theorem}
\label{thm:CCH11:gammaGraph} \emph{\cite{CHH10}}  For any graph $H$, there
exists some graph $G$ such that $G(\gamma) \simeq H$.  That is, every graph is
the $\gamma$-graph of some graph. 
\end{theorem}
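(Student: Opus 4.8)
The plan is to prove the theorem constructively: given an arbitrary graph $H$ with $V(H)=\{h_1,\dots,h_n\}$, I would build a specific graph $G=G_H$ whose minimum dominating sets are in bijection with $V(H)$ and whose slide-adjacency exactly reproduces $E(H)$. Take $G$ to consist of a \emph{selector} copy $T=\{t_1,\dots,t_n\}$ of $H$ (so $t_it_j\in E(G)$ iff $h_ih_j\in E(H)$), one \emph{anchor} vertex $f$ joined to every vertex of $T$ and carrying two pendant leaves $x,y$, and two \emph{guard} vertices $r_1,r_2$, each joined to every vertex of $T$, with $r_1r_2\notin E(G)$; no further vertices or edges are added. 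Thus $N_G(t_i)=\{t_j:h_ih_j\in E(H)\}\cup\{f,r_1,r_2\}$.

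The first step is to show that $\gamma(G)=2$ and, more importantly, that the $\gamma$-sets of $G$ are \emph{exactly} the sets $D_i=\{f,t_i\}$ for $i\in\{1,\dots,n\}$. The lower bound $\gamma(G)\ge 2$ comes from the two disjoint vertex sets $\{f,x,y\}$ and $\{r_1\}\cup T$: every dominating set must meet the first (to dominate $x$ and $y$) and the second (to dominate $r_1$). Each $D_i$ is dominating, since $f$ dominates $x$, $y$, itself and all of $T$, while $t_i$ dominates $r_1$ and $r_2$; hence $\gamma(G)=2$. Conversely, a $\gamma$-set $D$ has two elements and therefore meets each of the two disjoint sets above in exactly one vertex; the vertex it contributes to $\{f,x,y\}$ must be $f$ (choosing a leaf would strand the other leaf), and the vertex it contributes to $\{r_1\}\cup T$ cannot be $r_1$, because the only neighbours of $r_2$ lie in $T$, so $\{f,r_1\}$ fails to dominate $r_2$; hence it is some $t_i$. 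So the $\gamma$-sets are precisely $D_1,\dots,D_n$.

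The second step is routine: two distinct $\gamma$-sets $D_i$ and $D_j$ differ only in the single vertex $t_i$ versus $t_j$, so by the definition of the slide-model they are adjacent in $G(\gamma)$ if and only if $t_it_j\in E(G)$, i.e.\ if and only if $h_ih_j\in E(H)$, and Step~1 guarantees every such slide lands on a $\gamma$-set. No other adjacencies can occur since every $\gamma$-set has the form $D_i$. Therefore $G(\gamma)\simeq H$.

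I expect the real content to lie in Step~1, and specifically in two design choices. The pendant pair $x,y$ at $f$ is what simultaneously forces $f$ into every minimum dominating set and forbids any slide that moves $f$ (sliding $f$ to a leaf, or to a $t_j$, leaves a leaf undominated), so the anchor is rigid; this rules out spurious edges of $G(\gamma)$. The subtler point — the main obstacle — is the need for \emph{two} non-adjacent guards rather than one: with a single guard $r$, the set $\{f,r\}$ would itself be a minimum dominating set, adjacent via the edge $rt_i$ to every $D_i$, so $G(\gamma)$ would be $H$ together with an added dominating vertex instead of $H$; taking $r_1,r_2$ with $r_1r_2\notin E(G)$ kills this, since no single vertex outside $T$ dominates both guards. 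Finally, the sharper ``infinitely many such $G$'' statement comes for free: adjoining to $G$ any fixed number of disjoint copies of $K_{1,2}$ (or of any component with a unique minimum dominating set) leaves the $\gamma$-graph unchanged, because the $\gamma$-graph of a disjoint union is the Cartesian product of the $\gamma$-graphs of the components and taking a Cartesian product with $K_1$ does nothing, while these graphs are pairwise non-isomorphic.
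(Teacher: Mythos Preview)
Your proof is correct and your construction is identical to the one the paper restates from \cite{CHH10}: your anchor $f$ with pendants $x,y$ is their vertex $c$ with pendants $c_1,c_2$, and your non-adjacent guards $r_1,r_2$ are their vertices $a,b$. Your write-up is more detailed than what the paper records (and your ``infinitely many'' extension via disjoint $K_{1,2}$ components differs cosmetically from the paper's device of adding extra pendants at $c$), but the approach is the same.
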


Subramanian and Sridharan \cite{SS08} independently defined a different
$\gamma$-graph of a graph $G$, denoted $\gamma\cdot G$. The vertex set of
$\gamma\cdot G$ is the same as $G(\gamma)$; however, for $u,w \in
V(\gamma\cdot G)$ with associated $\gamma$-sets $S_{u}$ and $S_{w}$ in $G$,
$u$ and $w$ are adjacent in $\gamma\cdot G$ if and only if there exist some
$v_{u} \in S_{u}$ and $v_{w} \in S_{w}$ such that $S_{w}= (S_{u} - \{v_{u}\})
\cup\{v_{w}\}$. This version of the $\gamma$-graph was dubbed the ``single
vertex replacement adjacency model" by Edwards \cite{EdThesis}, and is
sometimes more colloquially referred to as the ``jump model" and the ``jump
$\gamma$-graph". Further results concerning $\gamma\cdot G$ can be found in
\cite{LV10, SS09, SS13}. Notably, if $G$ is a tree or a unicyclic graph, then
there exists a graph $H$ such that $\gamma\cdot H = G$ \cite{SS09}.
Conversely, if $G$ is the (jump) $\gamma$-graph of some graph $H$, then $G$
does not contain any induced $K_{3,2}$, $P_{3} \vee K_{2}$, or $(K_{2}\cup
K_{1}) \vee2 K_{1}$ \cite{LV10}.

Also using a jump-adjacency model, Haas and Seyffarth \cite{HS14} define the
\emph{$k$-dominating graph} of $G$, $D_{k}(G)$, as the graph with vertices
corresponding to the $k$-dominating sets of $G$ (i.e. the dominating sets of
cardinality at most $k$). Two vertices in the $k$-dominating graph are
adjacent if and only if the symmetric difference of their associated
$k$-dominating sets contains exactly one element. Additional results can be
found in \cite{HS17, HIMNOST16, SMN16}.

In this paper, we examine several variations to the $\gamma$-graph, and
provide realizability results similar to Theorem \ref{thm:CCH11:gammaGraph}.
For consistency, all figures show the construction required to realize the
graph $H = K_{4}-e$ (the 2-fan). Unless otherwise specified, we consider only
the slide-adjacency model in our variations.


\section{The $\ir$, $\gamma_{t}$, $\gamma_{pr}$ and $\gamma_{c}$-graphs}

To begin, we examine four domination-related parameters and their respective
extensions of Theorem \ref{thm:CCH11:gammaGraph}. A vertex set $S$ is said to
\emph{totally-dominate} a graph $G$ if it is dominating and for each $v\in S$
there exists $u\in S$ such that $u$ and $v$ are adjacent. The cardinality of a
smallest total-dominating set is the \emph{total-domination number}
$\gamma_{t}(G)$, as first introduced by Cockayne et al. in \cite{CDH80}.
Closely related, a \emph{paired-dominating} set $S$ is a total-dominating with
the additional requirement that the induced subgraph $G[S]$ has a perfect
matching. The \emph{paired-domination number} $\gamma_{pr}$ was defined
similarly by Haynes and Slater in \cite{HS98}. Since every paired-dominating
set is also a total-dominating set, for every graph $G$ without isolated
vertices, $\gamma(G)\leq\gt(G)\leq\gpr(G)$ \cite{HHS1}. A
\emph{connected-dominating} set $S$ is a dominating set where $G[S]$ is
connected, and the cardinality of a smallest connected-dominating set is the
\emph{connected-domination number} $\gamma_{c}$, as defined by Sampathkumar
and Walikar \cite{SW79}. For all nontrivial connected graphs $G$,
$\ir(G)\leq\gamma(G)\leq\gamma_{c}(G)\leq\gamma_{t}(G)$ and $\gamma
(G)\leq2\ir(G)-1$ \cite{HHS1}.

We say the $\ir$\emph{-}$\emph{graph}$, the \emph{$\gamma_{t}$-graph}, the
\emph{$\gamma_{pr}$-graph}, and the \emph{$\gamma_{c}$-graph} of $G$ are the
graphs with vertices representing the minimum-cardinality maximal irredundant,
total-dominating, paired-dominating, and connected dominating sets of $G$,
respectively, and where adjacency is defined using the vertex-slide model.
Using the same construction as Connelly et al. in \cite{CHH10} and restated
below, we find a result analogous to Theorem \ref{thm:CCH11:gammaGraph} for
the $\ir$-graph, $\gamma_{t}$-graph, $\gamma_{pr}$-graph, and $\gamma_{c}$-graph.

\begin{coro}
\label{thm:prGraph} Every graph $H$ is the $\ir$-graph, $\gamma_{pr}$-graph,
$\gamma_{t}$-graph, and $\gamma_{c}$-graph of infinitely many graphs.
\end{coro}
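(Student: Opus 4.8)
The plan is to recall the Connelly–Hedetniemi–Hedetniemi construction from Theorem \ref{thm:CCH11:gammaGraph} and verify that, when applied to a graph $H$, the same graph $G$ (or a mild modification of it) simultaneously realizes $H$ as its $\ir$-graph, $\gamma_t$-graph, $\gamma_{pr}$-graph, and $\gamma_c$-graph. Recall that their construction takes $H$ with $V(H)=\{h_1,\dots,h_n\}$ and builds $G$ by creating, for each vertex $h_i$, a gadget (a small ``flower'' of pendant-type vertices attached to a central vertex $c_i$) so that the $\gamma$-sets of $G$ are exactly in bijection with $V(H)$, and so that two $\gamma$-sets differ by a single slide along an edge precisely when the corresponding vertices are adjacent in $H$. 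The key structural feature is that in $G$ the unique minimum dominating sets each consist of one ``representative'' vertex per gadget, the representatives are forced, and the only freedom — the only available slide — is inside the one gadget encoding an edge of $H$.

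The first step is to argue that for this $G$ we have $\ir(G)=\gamma(G)=\gamma_c(G)=\gamma_t(G)=\gamma_{pr}(G)$ and that the minimum sets for all five parameters coincide as vertex sets. Since $\ir(G)\le\gamma(G)\le\gamma_c(G)\le\gamma_t(G)\le\gamma_{pr}(G)$ always holds (using the chain quoted in the excerpt together with $\gamma_t\le\gamma_{pr}$), it suffices to check two things: that every minimum dominating set of $G$ is already a maximal irredundant set of the same size (giving $\ir(G)=\gamma(G)$), and that every minimum dominating set of $G$ is connected, total, and has a perfect matching in $G[S]$ (collapsing the rest of the chain). The second of these should be immediate if each gadget is designed so that the chosen representative vertices form edges/paths within $G[S]$; if the original CHH gadget does not already have this property, one inflates each gadget slightly — e.g. replace each representative vertex by an edge, or add a pendant edge structure — so that $G[S]$ becomes a perfect matching (hence connected within each gadget is not needed, but for $\gamma_c$ one instead ensures $G[S]$ is connected by joining gadget representatives through a shared apex). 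This is the step where one must be careful that the modification does not create new minimum sets or new slides, i.e. that the bijection with $V(H)$ and the edge-correspondence with $E(H)$ are preserved; I expect this bookkeeping to be the main obstacle, though it is routine rather than deep.

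The second step is to check that, once the vertex sets of the minimum sets agree across all five variants, the slide-adjacency relation is the same for each. Because every minimum set of each type is literally the same subset of $V(G)$ as a $\gamma$-set, and slide-adjacency is defined purely in terms of these subsets and the edges of $G$, the $\ir$-graph, $\gamma_t$-graph, $\gamma_{pr}$-graph, and $\gamma_c$-graph all have the same vertex set and the same edge set as $G(\gamma)$, hence are all isomorphic to $H$ by Theorem \ref{thm:CCH11:gammaGraph}. The only subtlety is that a slide producing a new set must land inside the same family: one must confirm that sliding a representative vertex along a gadget edge yields another minimum irredundant / total / paired / connected dominating set, which again follows from the gadget design. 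Finally, to get \emph{infinitely many} graphs rather than one, one appends to $G$ any fixed ``inert'' attachment — for instance, a long path or a disjoint forced structure pendant at a vertex not involved in any $\gamma$-set transition — scaled by a parameter $k\in\mathbb{N}$; these additions change $G$ without changing the set of minimum sets of any of the five types or the slides among them, so each of the five variant graphs remains isomorphic to $H$ for every $k$. Assembling these observations completes the proof.
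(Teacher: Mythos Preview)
Your overall strategy --- reuse the Connelly--Hedetniemi--Hutson construction and check that the minimum sets for all five parameters coincide --- is exactly what the paper does. However, you have misremembered the construction itself, and this causes most of the complications you anticipate to evaporate.

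The CHH graph $G$ is not built from one gadget per vertex of $H$. Rather, one takes a copy of $H$, adds three new vertices $a,b,c$ each joined to every vertex of $H$, and then hangs two (or more) pendants $c_1,c_2$ off $c$. The pendants force $c$ into every $\gamma$-set; since $\{c\}$ leaves $a$ and $b$ undominated, $\gamma(G)\ge 2$; and $S_i=\{c,v_i\}$ dominates for each $v_i\in V(H)$, so $\gamma(G)=2$ and the $\gamma$-sets are precisely the $S_i$. Because each $S_i$ induces the single edge $cv_i$, the set $S_i$ is automatically total dominating, paired dominating, and connected dominating --- no inflation or apex-joining is needed. For irredundance, your argument that ``every minimum dominating set is maximal irredundant'' is true but does not by itself give $\ir(G)=\gamma(G)$; the paper instead invokes the standard bound $\gamma(G)\le 2\,\ir(G)-1$, which with $\gamma(G)=2$ forces $\ir(G)=2$. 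Finally, neither $\{c,a\}$ nor $\{c,b\}$ dominates (each misses the other of $a,b$), so the $S_i$ are the only minimum sets of any of the five types, and the slide $S_i\leftrightarrow S_j$ is available exactly when $v_iv_j\in E(H)$.

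For ``infinitely many'', the paper simply allows more than two pendants at $c$; this changes $G$ without disturbing any of the above. Your proposed long-path attachment is riskier, since a pendant path of length $\ge 2$ would raise $\gamma(G)$ and introduce new minimum sets.
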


\noindent For reference, we restate Connelly et al.'s construction for a graph
$G$ with $G(\gamma)\simeq H$.

\noindent\underline{Construction:} Given some graph $H$ with $V(H) =
\{v_{1},v_{2},\dots, v_{n}\}$, to construct a graph $G$ with $G(\gamma)\simeq
H$, begin with a copy of $H$ and attach vertices $a,b,c$ to every vertex of
$H$. Then, add two (or more) pendant vertices to $c$, labelled as $c_{1}$ and
$c_{2}$ (see Figure \ref{fig:gprConst}).


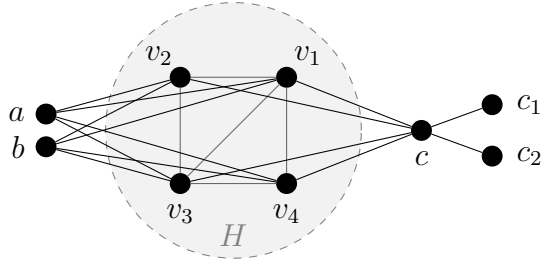
\begin{figure}[H]
\centering
\begin{tikzpicture}					
							\fill[lgray, dashed] (0,0) circle (1.7cm);
							
							\node [std](v1) at (45:1cm) {};
								\path (v1) ++(45:4 mm) node (x1L) {$v_1$};
							\node [std](v2) at (135:1cm){};
									\path (v2) ++(135:4 mm) node (x2L) {$v_2$};
							\node [std](v3) at (225:1cm) {};
								\path (v3) ++(270:4 mm) node (x3L) {$v_3$};
							\node [std](v4) at (315:1cm){};
									\path (v4) ++(270:4 mm) node (x4L) {$v_4$};
							\draw[mgray](v1)--(v2)--(v3)--(v4)--(v1)--(v3);
							
							\draw[gray, dashed] (0,0) circle (1.7cm);
							\node (H) at (0,-1.4) [color=gray](L){$H$};	
	
							\node [std](a) at (175:25mm)[label=left:$a$]  {};			
							\node [std](b) at (185:25mm)[label=left:$b$]  {};

							\node [std](c) at (0:25mm)[label=below:$c$]  {};	
							
							\foreach \i in {1,2,3,4}
									{\draw(a)--(v\i)--(b);
									\draw(c)--(v\i);}
							
						\foreach \i / \j in {1/20, 2/-20}
							{ \path (c) ++(\j:10 mm) node (c\i) [std] {};
								\path (c\i) ++(0:5 mm) node (cL\i)  {$c_{\i}$};	
								\draw(c)--(c\i);
							}
							
				\end{tikzpicture}
\caption{The graph $G$ constructed from $H$ from \cite{CHH10}.}%
\label{fig:gprConst}%
\end{figure}

From the pendant vertices, $c$ is in every $\gamma$-set of $G$; however, as
$a$ and $b$ remain undominated, $\{c\}$ is not itself a $\gamma$-set. Thus,
$\gpr(G) \geq\gamma(G) \geq2$ (likewise $\gamma_{c}(G) \geq\gt(G) \geq
\gamma(G) \geq2$). It follows that for each $v_{i}\in V(H)$, the set
$S_{i}=\{c,v_{i}\}$ is a $\gamma$-set. Since $\ir(G) \leq2 = \gamma(G) \leq2
\ir(G)-1$, it follows that $S_{i}$ is an $\ir$-set. Moreover, since $cv_{i}
\in E(G)$, each $S_{i}$ is also a $\gpr$-set, a $\gt$-set, and $\gamma_{c}%
$-set. Since neither $\{c,a\}$ nor $\{ c,b\}$ is dominating, the collection
$\{S_{i}: 1 \leq i \leq n\}$ consists of all the $\ir$, $\gamma$, $\gt$,
$\gpr$, $\gamma_{c}$-sets of $G$.


\section{The $\gid$-graph}

\label{subsec:GIDExist}

A popular variation on domination is the topic of identifying codes. A vertex
set $S$ is an \emph{identifying code} (ID-code) if for each $v \in V$, the
closed neighbourhood of $v$ and $S$ have unique, nonempty intersection. The
size of a smallest ID-code is denoted $\gid(G)$, and an ID-code with
cardinality $\gid(G)$ is called an $\gid$-\emph{set}. If $\gid(G)$ is finite,
$G$ is said to be \emph{identifiable} (or \emph{distinguishable}); otherwise,
if $G$ is not identifiable, $\gid(G)$ is defined to be $\gid(G)=\infty$. The
\emph{intersection set} of a vertex $v$ with respect to a vertex subset $S$ is
the set $I_{S}(v) = N[v]\cap S$. Thus, $S$ is an identifying code of $G$ if
all of its intersection sets are unique and nonempty.

Originally introduced by Karpovsky et al. in 1998 \cite{KCL98}, ID-codes were
proposed as a model for the positioning of fault-detection units on
multiprocessor systems (for additional references, see Lobstein's extensive
bibliography \cite{Biblio}). Consider now the problem of migrating the
detecting units from one configuration to another, such that only one
detecting unit can be moved at time to an adjacent processor, and at each step
the configuration remains an ID-code. When given a certain starting
configuration, what other configurations are reachable under these conditions?
How many steps are required to move between them? Are there multiple routes
from start to destination, or are we stuck with a single path? To aid in
addressing this family of questions, we define the $\gid$-\emph{graph of a
graph $G$}, $G(\gid)=(V(\gid),E(\gid))$, similarly to the $\gamma$-graph, but
where the vertices now correspond to the $\gid$-sets in $G$ instead.

As a first result, we extend Theorem \ref{thm:CCH11:gammaGraph} to
$\gid$-graphs. The construction and proof are similar; however, in
consideration of the additional identification requirements, multiple copies
of the graph $\mathcal{C} = C_{4}\circleddash K_{1}$, the \emph{depleted
corona of $C_{4}$}, in Figure \ref{fig:gidHand} are used to force certain
vertices into the $\gid$-set. Given any graph $G^{\prime}$, we construct the
graph $G$ by adding an edge between $x_{1} \in V(\mathcal{C})$ and any $v\in
V(G^{\prime})$ (we say $\mathcal{C}$ is \emph{attached to $G^{\prime}$ at $v$}).

\begin{figure}[tbh]
\centering
\begin{tikzpicture}				
					
							\foreach \i/\x in {1/1.5, 2/2.5,3/3.5,4/4.5}
								\node [std](x\i) at (\x,1.5)[label=below:$x_\i$] {};	
							
							\draw(x1)--(x2)--(x3)--(x4);		
							
							\foreach \i/\x in {1/1.5, 2/2.5,3/3.5}
								\node [std](y\i) at (\x,2.5)[label=above:$y_\i$] {};							
								
							\foreach \i/\x in {1/1.5, 2/2.5,3/3.5}
								\draw (y\i)--(x\i);
										
							\draw(x1) to [out=30,in=150](x4);								
						
				\end{tikzpicture}
\caption{The graph $\mathbf{\mathcal{C}}$ in Lemmas \ref{lem:C} and
\ref{lem:gidHand}.}%
\label{fig:gidHand}%
\end{figure}

\begin{lemma}
\label{lem:C}  The vertex set $X=\{x_{1},x_{2},x_{3}\}$ is the unique
$\gid$-set of $\mathcal{C}$.
\end{lemma}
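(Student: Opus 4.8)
The plan is a short finite verification that exploits how rigid $\mathcal{C}$ is. Write $\mathcal{C}$ as the $4$-cycle $x_{1}x_{2}x_{3}x_{4}x_{1}$ with pendant vertices $y_{1},y_{2},y_{3}$ attached at $x_{1},x_{2},x_{3}$ respectively, so that $N[x_{1}]=\{x_{1},x_{2},x_{4},y_{1}\}$, $N[x_{2}]=\{x_{1},x_{2},x_{3},y_{2}\}$, $N[x_{3}]=\{x_{2},x_{3},x_{4},y_{3}\}$, $N[x_{4}]=\{x_{1},x_{3},x_{4}\}$, and $N[y_{i}]=\{x_{i},y_{i}\}$ for $i=1,2,3$. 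First I would check that $X=\{x_{1},x_{2},x_{3}\}$ is an identifying code by computing $I_{X}(v)=N[v]\cap X$ for all seven vertices; this yields the seven sets $\{x_{1}\}$, $\{x_{2}\}$, $\{x_{3}\}$, $\{x_{1},x_{2}\}$, $\{x_{2},x_{3}\}$, $\{x_{1},x_{3}\}$, $\{x_{1},x_{2},x_{3}\}$, which are nonempty and pairwise distinct. Hence $\gid(\mathcal{C})\le 3$.

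Next comes the lower bound together with a structural restriction that does most of the work. Any identifying code $S$ is in particular a dominating set, so $S$ must meet each of the three pairwise-disjoint sets $N[y_{1}]=\{x_{1},y_{1}\}$, $N[y_{2}]=\{x_{2},y_{2}\}$, $N[y_{3}]=\{x_{3},y_{3}\}$. Therefore $|S|\ge 3$, so $\gid(\mathcal{C})=3$; moreover every $\gid$-set $S$ contains exactly one vertex of each $\{x_{i},y_{i}\}$ and no other vertex. In particular $x_{4}\notin S$. Write $S=\{a_{1},a_{2},a_{3}\}$ with $a_{i}\in\{x_{i},y_{i}\}$.

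Finally I would pin down the $a_{i}$. Since $x_{4}\notin S$ and $x_{4}$ must be dominated, $S\cap\{x_{1},x_{3}\}\neq\varnothing$, i.e. $a_{1}=x_{1}$ or $a_{3}=x_{3}$. To distinguish $y_{1}$ from $x_{1}$ we need $I_{S}(y_{1})\neq I_{S}(x_{1})$; as $N[y_{1}]\subseteq N[x_{1}]$ the inclusion $I_{S}(y_{1})\subseteq I_{S}(x_{1})$ is proper only if $S$ meets $N[x_{1}]\setminus N[y_{1}]=\{x_{2},x_{4}\}$, and since $x_{4}\notin S$ this forces $a_{2}=x_{2}$ (the same conclusion also follows from distinguishing $y_{3}$ from $x_{3}$). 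Now split on the first constraint: if $a_{1}=y_{1}$ then $a_{3}=x_{3}$, so $S=\{y_{1},x_{2},x_{3}\}$, but then $I_{S}(x_{2})=\{x_{2},x_{3}\}=I_{S}(x_{3})$, a contradiction; symmetrically $a_{3}=y_{3}$ forces $S=\{x_{1},x_{2},y_{3}\}$ with $I_{S}(x_{1})=\{x_{1},x_{2}\}=I_{S}(x_{2})$, again impossible. Hence $a_{1}=x_{1}$ and $a_{3}=x_{3}$, so $S=X$.

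I expect no real obstacle here beyond the bookkeeping; the one point worth isolating is the observation that the three pendant closed neighbourhoods are pairwise disjoint, since this simultaneously gives $\gid(\mathcal{C})\ge 3$ and ejects $x_{4}$ from every $\gid$-set — once that is in hand, the separation requirements at the pendants immediately collapse the remaining cases onto $X$.
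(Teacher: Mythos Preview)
Your proof is correct and follows essentially the same approach as the paper: both verify that $X$ is identifying, use the three pairwise-disjoint pendant closed neighbourhoods to force $|S|=3$, $S\subseteq X\cup Y$, and $x_4\notin S$, then use domination of $x_4$ and a short case analysis to rule out the alternatives. The only cosmetic difference is that you first force $a_2=x_2$ via the $y_1/x_1$ (or $y_3/x_3$) separation and then handle two symmetric cases, whereas the paper assumes WLOG $x_1\in S$ and checks three cases directly.
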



\begin{proof}
Suppose that $\mathcal{C}$ has a $\gid$-set $S$. Since the vertices in
$Y=\{y_{1},y_{2},y_{3}\}$ are all pendant vertices in $\mathcal{C}$, for each
$1 \leq i \leq3$, either $x_{i}$ or $y_{i}$ is in $S$. Since $X$ is
identifying, it follows that $\gid(\mathcal{C}) = 3$ and that $X$ is a
$\gid$-set of $\mathcal{C}$.

Notice that since $|S|=3$ and each $y_{i}$ is pendant, $S\subseteq X\cup Y$
and $x_{4}\notin S$. To dominate $x_{4}$, either $x_{1}$ or $x_{3}$ is in $S$.
Without loss of generality, say $x_{1} \in S$. To show uniqueness, we need
only verify that the sets $S_{2}=\{x_{1},y_{2},x_{3}\}$, $S_{3}=\{x_{1}%
,x_{2},y_{3}\}$ and $S_{2,3}=\{x_{1},y_{2},y_{3}\}$ are not identifying. For
$S_{2}$, $I_{S_{2}}(x_{3})= I_{S_{2}}(y_{3})=\{x_{3}\}$ and is therefore not
identifying. Likewise for $S_{2,3}$, $I_{S_{2,3}}(x_{3})= I_{S_{2,3}}%
(y_{3})=\{x_{3}\}$. Finally for $S_{3}$, $I_{S_{3}}(x_{4})= I_{S_{3}}%
(y_{1})=\{x_{1}\}$. It follows that $S=X =\{x_{1},x_{2},x_{3}\}$ is the unique
$\gid$-set of $\mathcal{C}$.
\end{proof}


\begin{lemma}
\label{lem:gidHand}  Let $G^{\prime}$ be any graph, and construct $G$ by
attaching $\mathcal{C}$ to $G^{\prime}$ at some $v\in V(G^{\prime})$. If $S$
is any $\gid$-set of $G$, then $\{x_{1},x_{2},x_{3}\} \subseteq S$. Moreover,
if $S^{\prime}$ is a $\gid$-set of $G^{\prime}$, then $S^{\prime}\cup
\{x_{1},x_{2},x_{3}\}$ is identifying in $G$.
\end{lemma}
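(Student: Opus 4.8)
The plan is to prove the two assertions of Lemma~\ref{lem:gidHand} separately. For the first, I would show that any $\gid$-set $S$ of $G$ restricted to $V(\mathcal{C})$ behaves essentially as in Lemma~\ref{lem:C}, up to the single new edge $x_1v$. Note that in $G$, the vertices $y_1,y_2,y_3$ are still pendant, and $x_4$ still has closed neighbourhood contained in $\{x_1,x_2,x_3,x_4\}$; the only vertex of $\mathcal{C}$ whose closed neighbourhood changed is $x_1$, which gained $v$. So first I would argue that for each $i\in\{1,2,3\}$, at least one of $x_i,y_i$ lies in $S$ (else $y_i$ is undominated), which forces $|S\cap(X\cup Y)|\ge 3$.

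\medskip

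Next I would rule out the ``wrong'' choices. As in the proof of Lemma~\ref{lem:C}: to distinguish $x_4$ from $y_1$ we need $x_1$ or $x_3$ in $S$ to hit $x_4$, but more carefully, if $y_1\in S$ and $x_1\notin S$, then $I_S(x_4)$ and $I_S(y_1)$ — computed with closed neighbourhoods inside $\mathcal{C}$, since neither $x_4$ nor $y_1$ is adjacent to anything outside — would coincide unless $x_3\in S$ too, and a short case check (mirroring $S_2,S_3,S_{2,3}$ from Lemma~\ref{lem:C}) shows the only way to identify all of $x_4,y_1,y_2,y_3$ using only vertices of $X\cup Y$ is to take $X=\{x_1,x_2,x_3\}$. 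The key subtlety relative to Lemma~\ref{lem:C} is that $x_1$ now also helps identify vertices of $G'$, but that can only make $x_1$ more necessary, not less; and crucially $I_S(x_1)$ within $\mathcal{C}$ versus $I_S(v)$ could now collide, so I should check that $x_2\in S$ (which is forced) already separates $x_1$ from $v$ since $x_2\notin N[v]$. **The main obstacle** is handling this interaction cleanly — showing that the new edge cannot allow $S$ to ``economize'' by using a $y_i$ in place of an $x_i$ — but it reduces to the observation that $x_4$ and the $y_i$'s are untouched by the attachment, so their identification constraints are exactly those of Lemma~\ref{lem:C}.

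\medskip

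For the second assertion, let $S'$ be a $\gid$-set of $G'$ and put $T=S'\cup\{x_1,x_2,x_3\}$; I must show $T$ is identifying in $G$, i.e.\ every vertex of $G$ has a nonempty intersection set and all intersection sets are distinct. For $w\in V(\mathcal{C})$, $I_T(w)=I_T(w)\cap X$ together with possibly $v$ when $w=x_1$; since $X$ identifies all of $\mathcal{C}$ (Lemma~\ref{lem:C}) and $x_1$'s intersection set only grows, the vertices of $\mathcal{C}$ remain pairwise distinguished and nonempty, and none of them can collide with a vertex of $G'$ because vertices of $G'$ have empty intersection with $\{x_2,x_3\}$ and with $Y$ while each $w\in\{x_2,x_3,x_4,y_1,y_2,y_3\}$ has $I_T(w)\subseteq\{x_1,x_2,x_3\}$ containing $x_2$ or $x_3$, and $x_1$ itself is separated from any $u\in V(G')$ with $x_1\in I_T(u)$ by the presence of $x_2\in I_T(x_1)\setminus I_T(u)$. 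For $u\in V(G')$, $I_T(u)=I_{S'}(u)$ unless $u=v$, in which case $I_T(v)=I_{S'}(v)\cup\{x_1\}$; since $S'$ already identifies $G'$, the only thing to check is that adding $x_1$ to $v$'s code does not create a collision with another $u'\in V(G')$ — but that would require $I_{S'}(u')=I_{S'}(v)\cup\{x_1\}$, impossible since $x_1\notin V(G')\supseteq S'$. Hence all intersection sets in $G$ are nonempty and distinct, so $T$ is identifying.
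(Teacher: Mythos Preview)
Your proposal is correct and follows essentially the same approach as the paper. Both proofs first use the pendant vertices $y_1,y_2,y_3$ to force one of $x_i,y_i$ into $S$ for each $i$, then invoke the case analysis of Lemma~\ref{lem:C} (noting that $x_4$ and the $y_i$ have closed neighbourhoods entirely inside $\mathcal{C}$, hence unaffected by the new edge $x_1v$) to conclude $\{x_1,x_2,x_3\}\subseteq S$; for the second part both argue that intersection sets of $G'$-vertices are unchanged except at $v$, where the extra $x_1$ cannot create a collision. Your write-up is in fact somewhat more explicit than the paper's on the cross-identification between $V(\mathcal{C})$ and $V(G')$ (e.g.\ your observation that $x_2\in I_T(x_1)\setminus I_T(u)$ for any $u\in V(G')$), which the paper leaves implicit; the one place where your exposition drifts is the remark about ``$I_S(x_1)$ versus $I_S(v)$ could now collide'' inside the first-part discussion --- in that part $S$ is already assumed to be identifying, so no collision can occur, and this check really belongs to the second assertion.
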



\begin{proof}
Let $V(G^{\prime}) = \{v_{1},v_{2},\dots,v_{n}\}$, and suppose that $G$ was
constructed by attaching $\mathcal{C}$ to $G^{\prime}$ at $v_{n}$. Suppose
that $G$ has an $\gid$-set $S$. Regardless of whether $v_{n} $ is in $S$ or
not, each vertex of $Y=\{y_{1},y_{2},y_{3}\}$ remains pendant and so for each
$1 \leq i \leq3$, either $x_{i}$ or $y_{i}$ is in $S$. Using the same
arguments as in Lemma \ref{lem:C}, $\{x_{1},x_{2},x_{3}\} \subseteq S$.

Now suppose that $S^{\prime}$ is a $\gid$-set of $G^{\prime}$ and consider
$S=S^{\prime}\cup\{x_{1},x_{2},x_{3}\}$ in $G$. Since $S^{\prime}$ is
identifying, for each $v_{i} \in V(G^{\prime})$, the intersection set
$I_{S^{\prime}}(v_{i})$ in $G^{\prime}$ is unique. In $G$, all intersection
sets of $v_{1}, v_{2}, \dots, v_{n-1}$ remain the same. The intersection set
of $v_{n}$ becomes $I_{S}(v_{n}) = I_{S^{\prime}}(v_{n}) \cup\{x_{1}\} \neq
I_{s}(y_{1})$. From the first portion of this lemma, the sets $I_{S}(x_{i})$
for $1 \leq i \leq4$ are also unique. It follows that $S$ is identifying in
$G$.
\end{proof}


Notice that the converse of the second portion of Lemma \ref{lem:gidHand} does
not hold. For a counterexample, consider the graph $G$ in Figure
\ref{fig:leCE} constructed by attaching $\mathcal{C}$ to a copy of $C_{4}$
with $V(C_{4}) =\{v_{1},v_{2},v_{3},v_{4}\}$. Although $\{v_{1},v_{3}%
,x_{1},x_{2},x_{3}\}$ is a $\gid$-set of $G$, $\{v_{1},v_{3}\}$ is not a
$\gid$-set of $C_{4}$.

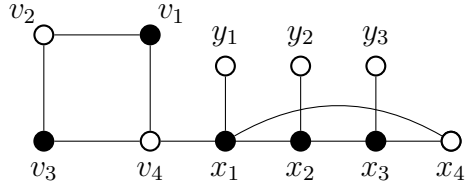
\begin{figure}[H]
\centering
\begin{tikzpicture}					
							\node [std](v1) at (45:1cm) {};
								\path (v1) ++(45:4 mm) node (x1L) {$v_1$};
							\node [wstd](v2) at (135:1cm){};
									\path (v2) ++(135:4 mm) node (x2L) {$v_2$};
							\node [std](v3) at (225:1cm) {};
								\path (v3) ++(270:4 mm) node (x3L) {$v_3$};
							\node [wstd](v4) at (315:1cm){};
									\path (v4) ++(270:4 mm) node (x4L) {$v_4$};
							\draw(v1)--(v2)--(v3)--(v4)--(v1);								
										
							\foreach \label/\rad in {1/10,2/20,3/30,4/40}
								\path (v4) ++(0:\rad mm) node (x\label) [std]  {};
							\draw(v4)--(x4);
							
							\foreach \i in {1, 2,3,4}
								\path (x\i) ++(-90:4 mm) node (xL\i) {$x_{\i}$};				
					
						\foreach \i/\x in {1,2,3}
								{	\path (x\i) ++(90:10 mm) node (y\i) [wstd]  {};
									\draw (y\i)--(x\i);			
									\path (y\i) ++(90:4 mm) node (yL\i) {$y_{\i}$};	
								}
										
							\draw(x1) to [out=30,in=150](x4);			
								\node [wstd](x4w) at (x4){};										
							
				\end{tikzpicture}
\caption{Counterexample to the converse of Lemma \ref{lem:gidHand}.}%
\label{fig:leCE}%
\end{figure}

\begin{theorem}
\label{thm:gamma-ID}  Every graph $H$ is the $\gid$-graph of some graph.
\end{theorem}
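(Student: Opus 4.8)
The plan is to mimic the Connelly--Hedetniemi--Hedetniemi construction, but with the two pendants on $c$ replaced by copies of $\mathcal C$ and with Lemma~\ref{lem:gidHand} doing the work of the pendant argument. Given $H$ with $V(H)=\{v_{1},\dots,v_{n}\}$, I would form $G$ as follows: take a copy of $H$; add two new vertices $a,b$ together with all edges $av_{i}$ and $bv_{i}$; attach (by a single edge each, as in the construction preceding Lemma~\ref{lem:C}) two copies of $\mathcal C$ to every $v_{i}$, so that each $v_{i}$ acquires two ``private'' forced vertices, one from each of its gadgets; and finally add one extra edge joining $a$ to a forced vertex of one gadget, say to $x_{1}$ of the first copy attached to $v_{1}$. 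Let $F$ be the union of the forced triples $\{x_{1},x_{2},x_{3}\}$ over all the attached copies of $\mathcal C$. The claim to be proved is that the $\gid$-sets of $G$ are exactly the sets $F\cup\{v_{i}\}$, $1\le i\le n$, and that $F\cup\{v_{i}\}\leftrightarrow v_{i}$ is an isomorphism $G(\gid)\simeq H$.

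The argument then has four steps. (1) Iterating Lemma~\ref{lem:gidHand} over all the attached copies of $\mathcal C$ shows every ID-code of $G$ contains $F$; since $b$ is dominated by no vertex of $F$, no subset of $F$ is identifying, so every ID-code properly contains $F$ and $\gid(G)\ge |F|+1$. (2) Verify directly that $F\cup\{v_{i}\}$ is identifying: vertices inside a copy of $\mathcal C$ keep (pairwise distinct) intersection sets carried by that copy, exactly as in Lemma~\ref{lem:C}; $v_{i}$ is pinned down by containing itself; each $v_{j}$ with $j\ne i$ is pinned down by its two private forced neighbours; $a$ is distinguished by the pair $\{x_{1},v_{i}\}$ and $b$ by $\{v_{i}\}$ alone. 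Hence $\gid(G)=|F|+1$. (3) If $F\cup\{w\}$ is an ID-code then $w$ must dominate $b$, so $w\in\{v_{1},\dots,v_{n}\}\cup\{b\}$; and $F\cup\{b\}$ is not identifying because $a$ and the pendant neighbour $y_{1}$ of $x_{1}$ then both have intersection set $\{x_{1}\}$. So the $\gid$-sets are precisely $\{F\cup\{v_{i}\}:1\le i\le n\}$. (4) Since $(F\cup\{v_{i}\})\triangle(F\cup\{v_{j}\})=\{v_{i},v_{j}\}$, any slide between two $\gid$-sets moves $v_{i}$ to a neighbour of $v_{i}$, and a forced vertex of $F$ can never be slid out (the result would omit part of $F$, contradicting step~(1)); as the only edges among $v_{1},\dots,v_{n}$ in $G$ are those of $H$, we get $F\cup\{v_{i}\}\sim F\cup\{v_{j}\}$ in $G(\gid)$ exactly when $v_{i}v_{j}\in E(H)$, i.e. $G(\gid)\simeq H$.

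The main obstacle is step~(2), the identification check, and in particular the reason a single copy of $\mathcal C$ per vertex does not suffice: if $v_{j}$ is not the chosen vertex and is non-adjacent in $H$ to the chosen $v_{i}$, then within $F\cup\{v_{i}\}$ the vertex $v_{j}$ ``sees'' only the forced vertices it inherited, which with one gadget would coincide with the intersection set $\{x_{1}^{(j)}\}$ of that gadget's pendant $y_{1}^{(j)}$; giving $v_{j}$ a second private forced neighbour breaks this. One then has to confirm that no new coincidences are created -- notably that the extra edge $ax_{1}$, inserted precisely to destroy the would-be coincidence $I(a)=I(b)=\{v_{i}\}$, does not itself produce a clash (it does not: $a$ picks up $x_{1}$, which no other relevant vertex carries together with $v_{i}$). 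The degenerate case $n\le 1$, where $H$ has no edges and $v_{i}$ always lies in the code, is easily handled by the same construction, and attaching still further copies of $\mathcal C$ to a low-degree gadget vertex leaves $G(\gid)$ unchanged, which gives the ``infinitely many'' strengthening as well.
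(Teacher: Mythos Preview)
Your argument is essentially correct and proves the theorem, but it uses a genuinely different construction from the paper's. The paper attaches copies of $\mathcal{C}$ not only to each $v_i$ (twice) but also to $a$ and to $b$; the forced set $X$ then already dominates $G$, and the need for a further vertex comes from the identification failures $I_X(a)=\{x_{a,1}\}=I_X(y_{a,1})$ and $I_X(b)=\{x_{b,1}\}=I_X(y_{b,1})$, which only a $v_i$ can repair simultaneously. You instead leave $a$ and $b$ gadget-free, let undomination of $b$ force the extra vertex, and break the $a$/$b$ symmetry with a single ad hoc edge $a\,x_1^{(1,1)}$. Your version uses two fewer gadgets but is less symmetric; the paper's version avoids the special edge and keeps every step uniform. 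Both routes yield $G(\gid)\simeq H$.

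One imprecision to fix in step~(1): the sentence ``iterating Lemma~\ref{lem:gidHand} \dots\ shows every ID-code of $G$ contains $F$'' is not literally true. An identifying code may spend four vertices in a gadget (e.g.\ $\{x_1,x_2,y_3,x_4\}$ identifies the gadget without using $x_3$), so an arbitrary ID-code need not contain $F$. What \emph{is} true, and what you need, is that any identifying code meets each gadget in at least three vertices, with equality only for the triple $\{x_1,x_2,x_3\}$; hence $\gid(G)\ge 6n$, and since a set of $6n$ gadget vertices leaves $b$ undominated, $\gid(G)\ge 6n+1$. Combined with step~(2) this gives $\gid(G)=6n+1$, and then any $\gid$-set must hit each gadget in exactly three vertices (else $b$ is again undominated), forcing $F\subseteq S$. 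After this repair your steps (3) and (4) go through exactly as written. Note also that the extra edge $a\,x_1^{(1,1)}$ does not disturb the local forcing argument for that gadget, since the case analysis in Lemma~\ref{lem:C} uses only internal coincidences (e.g.\ $I(x_4)=I(y_1)$ or $I(x_3)=I(y_3)$) that are unaffected by giving $x_1$ an extra outside neighbour.
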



\begin{proof}
Let $H = (V(H),E(H))$ be any nonempty graph with $V(H) = \{v_{1},v_{2}%
,\dots,v_{n}\}$. We construct a new graph $G$ such that $G(\gid) \simeq H$.

\noindent\underline{Construction:} Begin with a copy of $H$ and for each
$v_{i} \in V(H)$, attach two copies of the graph $\mathcal{C}$ from Figure
\ref{fig:gidHand} to $H$ at $v_{i}$, labelled as $\mathcal{C}_{i}$ and
$\mathcal{C}_{i}^{*}$ with $V(\mathcal{C}_{i}) = \{x_{i,1},x_{i,2}%
,x_{i,3},x_{i,4}, y_{i,1}, y_{i,2}, y_{i,3}\}$ and $V(\mathcal{C}_{i}^{*}) =
\{x_{i,1}^{*},x_{i,2}^{*},x_{i,3}^{*},x_{i,4}^{*}, y_{i,1}^{*}, y_{i,2}^{*},
y_{i,3}^{*}\}$. Now, add vertices $a$ and $b$ so that $av_{i} \in E(G)$ and
$bv_{i} \in E(G)$ for all $i=1,2,\dots, n$. Finally, attach two more copies of
$\mathcal{C}$, $\mathcal{C}_{a}$ and $\mathcal{C}_{b}$, at $a$ and $b$,
respectively (see Figure \ref{fig:gidConst}).

\noindent Consider the vertex set,
\begin{align*}
X = \left( \bigcup\limits_{\substack{1 \leq j \leq n \\1 \leq k \leq3}}
\{x_{j,k}, x_{j,k}^{*}\}\right)  \cup\left( \bigcup\limits_{1\leq k \leq3}
\{x_{a,k},x_{b,k}\}\right) .
\end{align*}
In particular, notice that $X$ consists of all the vertices within the various
$\mathcal{C}$ graphs that Lemma \ref{lem:gidHand} demonstrates are in every
$\gid$-set of $G$.

\begin{figure}
\centering
\begin{tikzpicture}					
							\fill[lgray, dashed] (0,0) circle (1.7cm);
							
							\node [std](v1) at (45:1cm)[label=above:$v_1$] {};
							\node [std](v2) at (135:1cm)[label=above:$v_2$] {};
							\node [std](v3) at (225:1cm)[label=below:$v_3$] {};
							\node [std](v4) at (315:1cm)[label=below:$v_4$] {};
							\draw[mgray](v1)--(v2)--(v3)--(v4)--(v1)--(v3);
							
							\draw[gray, dashed] (0,0) circle (1.7cm);
							\node (H) at (0,-1.4) [color=gray](L){$H$};			
												
							\node (x31) at (215:25mm) [label=above:$x_{3,1}$] {};
							\node (x41) at (305:25mm) [label=left:$x_{4,1}$] {};
							
							\draw(v3)--(x31);
							\draw(v4)--(x41);
							\node (x31*) at (235:25mm)[label=right:$x_{3,1}^*$] {};
							\node (x41*) at (325:25mm) [label=above:$x_{4,1}^*$]{};	
							\draw(v3)--(x31*);
							\draw(v4)--(x41*);
						
							\path (v1) ++(25:20 mm) node (x11) [std]  {};
								\draw(v1)--(x11);
								
							\foreach \label/\rad in {2/8,3/16,4/24}
								\path (x11) ++(25:\rad mm) node (x1\label) [std]  {};
							\draw(x11)--(x14);
							
							\foreach \x/\i in {1/2, 2/2,3/2,4/2}
								\path (x1\x) ++(115:3 mm) node (x1L\x) {$x_{1,\x}$};										
							
							\foreach \x/\y in {1/1, 2/2, 3/3}
								\path (x1\x) ++(-65:6mm) node (y1\y) [std]  {};
							\foreach \x/\y in {1/1, 2/2,3/3}
							\draw(x1\x)--(y1\y);	
							
							\foreach \y/\i in {1/2, 2/2,3/2}
								\path (y1\y) ++(-65:4 mm) node (y1L\y) {$y_{1,\y}$};								
							
							\draw(x11) to [out=0,in=240](x14);	
							
							\path (v1) ++(65:20 mm) node (x11*) [std]  {};
								\draw(v1)--(x11*);
								
							\foreach \label/\rad in {2/8,3/16,4/24}
								\path (x11*) ++(65:\rad mm) node (x1\label*) [std]  {};
							\draw(x11*)--(x14*);							
							
							\foreach \x/\i in {1/2, 2/2,3/2,4/2}
								\path (x1\x*) ++(-25:5 mm) node (x1L\x*) {$x_{1,\x}^*$};									
							
							\foreach \x/\y in {1/1, 2/2, 3/3}
								\path (x1\x*) ++(155:6mm) node (y1\y*) [std]  {};
							\foreach \x/\y in {1/1, 2/2,3/3}
							\draw(x1\x*)--(y1\y*);				
							
							\foreach \y/\i in {1/2, 2/2,3/2}
								\path (y1\y*) ++(155:5 mm) node (y1L\y*) {$y_{1,\y}^*$};										
						
							\draw(x11*) to [out=90,in=220](x14*);
							\path (v2) ++(115:20 mm) node (x21) [std]  {};
								\draw(v2)--(x21);
								
							\foreach \label/\rad in {2/8,3/16,4/24}
								\path (x21) ++(115:\rad mm) node (x2\label) [std]  {};
							\draw(x21)--(x24);
							
							\foreach \x/\i in {1/2, 2/2,3/2,4/2}
								\path (x2\x) ++(205:5 mm) node (x2L\x) {$x_{2,\x}$};										
							
							\foreach \x/\y in {1/1, 2/2, 3/3}
								\path (x2\x) ++(25:6mm) node (y2\y) [std]  {};
							\foreach \x/\y in {1/1, 2/2,3/3}
							\draw(x2\x)--(y2\y);	
							
							\foreach \y/\i in {1/2, 2/2,3/2}
								\path (y2\y) ++(25:5 mm) node (y2L\y) {$y_{2,\y}$};								
							
							\draw(x21) to [out=90,in=330](x24);	
							
							\path (v2) ++(155:20 mm) node (x21*) [std]  {};
								\draw(v2)--(x21*);
								
							\foreach \label/\rad in {2/8,3/16,4/24}
								\path (x21*) ++(155:\rad mm) node (x2\label*) [std]  {};
							\draw(x21*)--(x24*);							
							
							\foreach \x/\i in {1/2, 2/2,3/2,4/2}
								\path (x2\x*) ++(65:4 mm) node (x2L\x*) {$x_{2,\x}^*$};									
							
							\foreach \x/\y in {1/1, 2/2, 3/3}
								\path (x2\x*) ++(245:6mm) node (y2\y*) [std]  {};
							\foreach \x/\y in {1/1, 2/2,3/3}
							\draw(x2\x*)--(y2\y*);				
							
							\foreach \y/\i in {1/2, 2/2,3/2}
								\path (y2\y*) ++(245:4 mm) node (y2L\y*) {$y_{2,\y}^*$};										
						
							\draw(x21*) to [out=180,in=300](x24*);	
		
							\node [std](a) at (180:25mm)[label=below:$a$]  {};			
							
							\foreach \i in {1,2,3,4}
								\draw(a)--(v\i);
													
							\path (a) ++(180:15 mm) node (xa1) [std]  {};
								\draw(a)--(xa1);
								
							\foreach \label/\rad in {2/8,3/16,4/24}
								\path (xa1) ++(180:\rad mm) node (xa\label) [std]  {};
							\draw(xa1)--(xa4);
							
							\foreach \x in {1, 2,3,4}
								\path (xa\x) ++(90:3 mm) node (xaL\x) {$x_{a,\x}$};										
							
							\foreach \x/\y in {1/1, 2/2, 3/3}
								\path (xa\x) ++(270:6mm) node (ya\y) [std]  {};
							\foreach \x/\y in {1/1, 2/2,3/3}
							\draw(xa\x)--(ya\y);	
							
							\foreach \y/\i in {1/2, 2/2,3/2}
								\path (ya\y) ++(270:4 mm) node (yaL\y) {$y_{a,\y}$};								
							
							\draw(xa1) to [out=210,in=-30](xa4);	
						
							\node [std](b) at (0:25mm)[label=below:$b$]  {};					
							
							\foreach \i in {1,2,3,4}
								\draw(b)--(v\i);
															
							\draw(b)--(v1);
							\draw(b)--(v2);
							\draw(b)--(v3);
							\draw(b)--(v4);
					
							\path (b) ++(0:15 mm) node (xb1) [std]  {};
								\draw(b)--(xb1);
								
							\foreach \label/\rad in {2/8,3/16,4/24}
								\path (xb1) ++(0:\rad mm) node (xb\label) [std]  {};
							\draw(xb1)--(xb4);
							
							\foreach \x in {1, 2,3,4}
								\path (xb\x) ++(90:3 mm) node (xbL\x) {$x_{b,\x}$};										
							
							\foreach \x/\y in {1/1, 2/2, 3/3}
								\path (xb\x) ++(270:6mm) node (yb\y) [std]  {};
							\foreach \x/\y in {1/1, 2/2,3/3}
							\draw(xb\x)--(yb\y);	
							
							\foreach \y/\i in {1/2, 2/2,3/2}
								\path (yb\y) ++(270:4 mm) node (ybL\y) {$y_{b,\y}$};								
							
							\draw(xb1) to [out=-30,in=210](xb4);														
							
					\path (x31) ++(215:4 mm) node (x31C){};
					\fill[lgray] (x31C) circle (4mm);
						\draw[gray] (x31C) circle (4mm){};
						\node[std] at (x31){};
						\node at  (x31C) [color=gray]{$\mathcal{C}_3$};	
												
					\path (x31*) ++(235:4 mm) node (x31C*){};
					\fill[lgray] (x31C*) circle (4mm);
						\draw[gray] (x31C*) circle (4mm){};
						\node[std] at (x31*){};
						\node at  (x31C*) [color=gray]{$\mathcal{C}_3^*$};							
					
					\path (x41) ++(305:4 mm) node (x41C){};
					\fill[lgray] (x41C) circle (4mm);
						\draw[gray] (x41C) circle (4mm){};
						\node[std] at (x41){};
						\node at  (x41C)[color=gray]{$\mathcal{C}_4$};
						
					\path (x41*) ++(325:4 mm) node (x41C*){};
					\fill[lgray] (x41C*) circle (4mm);
						\draw[gray] (x41C*) circle (4mm){};
						\node[std] at (x41*){};
						\node at  (x41C*)[color=gray]{$\mathcal{C}_4^*$};
													
				\end{tikzpicture}
\caption{The graph $G$ constructed from $H$ in Theorem \ref{thm:gamma-ID}. }%
\label{fig:gidConst}%
\end{figure}
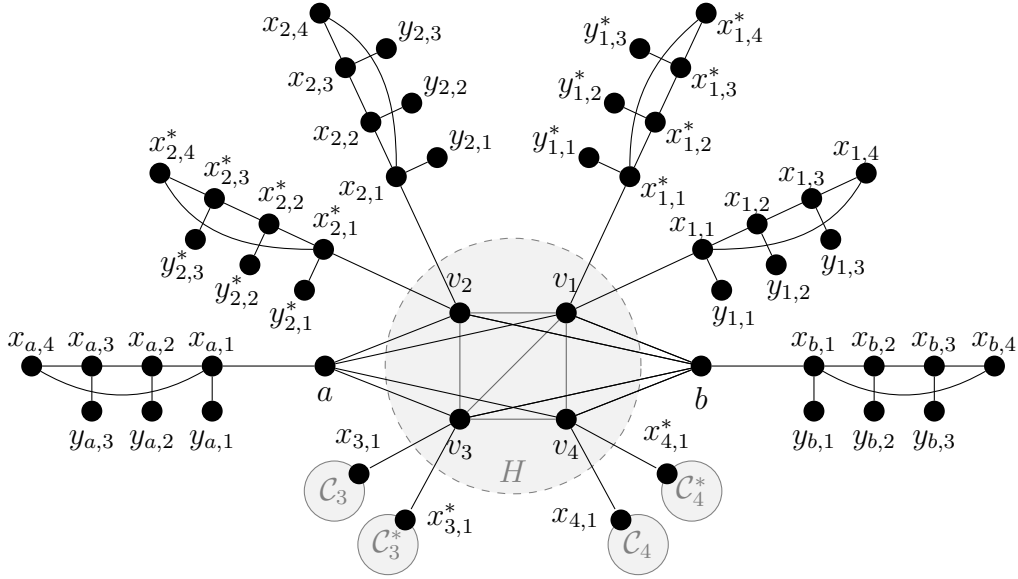

We first show that for each $1 \leq i \leq n$, the vertex set $S_{i} =
\{v_{i}\}\cup X$ is a $\gid$-set of $G$. From the construction of $G$, it is
clear that $S_{i}$ dominates $G$. Furthermore, from Lemma \ref{lem:gidHand},
we know that each vertex within a $\mathcal{C}$ subgraph is identified by
$S_{i}$. For vertices within $H$, the identifying set $I_{S_{i}}(v_{j})$ of
the vertex $v_{j}$ contains the unique pair $\{x_{j,1},x_{j,1}^{*}\}$,
ensuring that each $v_{j}$ is identified in $S_{i}$. Finally, $a$ and $b$ also
have the unique identifying sets $I_{S_{i}}(a)=\{x_{a,1}, v_{i}\}$ and
$I_{S_{i}}(b) = \{x_{b,1}, v_{i}\}$, respectively. It follows that $S_{i}$ is
identifying and dominating in $G$.

We now show that $S_{i}$ is a $\gid$-set. Notice that there are $2n+6$ pendant
vertices in $G$, and so $\gamma(G) \geq2n+6$. Indeed, it is easy to see that
$X=S_{i}-\{v_{i}\}$ is dominating in $G$, so $\gamma(G) = 2n+6$. Moreover,
$\gid(G) \geq\gamma(G) = 2n+6$. Again, by Lemma \ref{lem:gidHand}, we know
that any $\gid$-set of $G$ contains all of $X$; however $X$ is not identifying
as $I_{X}(a) = \{x_{a,1}\} = I_{X}(y_{a,1})$, and so, $\gid(G) \geq(2n+6)+1$.
Since $|S_{i}|=2n+7$ and $S_{i}$ is identifying, it follows that $S_{i}$ is a
$\gid$-set for all $\leqn$.

Let $\mathcal{S}_{\gid}=\{S_{1},S_{2},\dots,S_{n}\}$. We claim that
$\mathcal{S}_{\gid}$ is the collection of all $\gid$-sets of $G$. Since we
have already established that $\gid(G)=2n+7$ and that in every $\gid$-set,
$2n+6$ of the vertices are from $X$, every $\gid$-set of $G$ can viewed as
\textquotedblleft$X$-plus-one\textquotedblright. However, there is no single
vertex $w\in V(G)-V(H)$ such that $X\cup\{w\}$ identifies both pairs
$a,y_{a,1}$ and $b,y_{b,1}$. The set $S_{a}=\{a\}\cup X$ with $|S_{a}|=2n+7$
has $I_{S_{a}}(b)=\{x_{b,1}\}=I_{S_{a}}(y_{b,1})$ and is therefore not
identifying. Similarly, $S_{b}=\{b\}\cup X$ is not identifying. Thus,
$\mathcal{S}_{\gid}$ is the collection of all $\gid$-sets.

Consider now $G(\gid) = (V(\ggid),E(\ggid))$. By the above arguments,
$V(\ggid)$ $= \{v_{1}^{\prime},v_{2}^{\prime},\dots,v_{n}^{\prime}\}$, where
the vertex $v_{i}^{\prime}$ corresponds to the set $S_{i} \in\mathcal{S}%
_{\gid}$ for each $\leqn$. Since $v_{i}^{\prime}$ and $v_{j}^{\prime}$ in
$V(\ggid)$ are adjacent in $\ggid$ if and only if there exist $w_{i}\in S_{i}$
and $w_{j} \in S_{j}$ with $w_{i}w_{j}\in E(G)$ such that $S_{i} = (S_{j}
-\{w_{j}\}) \cup\{w_{i}\}$ and $S_{i}$ and $S_{j}$ differ at exactly one
vertex (that is, $v_{i}$ versus $v_{j}$), it follows that $v_{i}^{\prime}%
v_{j}^{\prime}\in E(\gid)$ if and only if $v_{i}v_{j} \in E(G)$. Therefore,
$\ggid \simeq H$ as required.
\end{proof}


In the construction of the graph $G$ in the proof of Theorem
\ref{thm:gamma-ID}, if instead of attaching only two copies of the graph
$\mathcal{C}$ to each vertex of the graph $H$, we attached three or more
copies, the same $\gid$-graph $H$ is obtained. This leads immediately to the
following corollary.

\begin{coro}
Every graph $H$ is the $\gid$-graph of infinitely many graphs.
\end{coro}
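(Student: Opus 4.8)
The plan is to observe that the corollary is an immediate strengthening of Theorem~\ref{thm:gamma-ID} obtained by varying the number of copies of $\mathcal{C}$ attached at each vertex, and that essentially nothing in the proof of Theorem~\ref{thm:gamma-ID} depended on that number being exactly two. First I would fix a graph $H$ with $V(H)=\{v_1,\dots,v_n\}$ and an integer $m\ge 2$, and modify the construction of Theorem~\ref{thm:gamma-ID} by attaching $m$ copies $\mathcal{C}_i^{(1)},\dots,\mathcal{C}_i^{(m)}$ of the depleted corona $\mathcal{C}$ to each $v_i$, while still attaching one copy each at the two extra vertices $a$ and $b$ (attaching only one copy at $a$ and at $b$ is enough, since their role is only to rule out $S_a=X\cup\{a\}$ and $S_b=X\cup\{b\}$ as $\gid$-sets; alternatively one may attach $m$ copies there too without harm). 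Call the resulting graph $G_m$; then $G_2$ is exactly the graph $G$ of Theorem~\ref{thm:gamma-ID}, and distinct values of $m$ yield non-isomorphic graphs (their orders differ), so exhibiting $G(\gid)\simeq H$ for every $m\ge 2$ gives infinitely many such graphs.

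Next I would let $X$ denote the union of all the $\{x_{\cdot,1},x_{\cdot,2},x_{\cdot,3}\}$-triples over all the copies of $\mathcal{C}$ in $G_m$, and rerun the argument of Theorem~\ref{thm:gamma-ID} verbatim. By Lemma~\ref{lem:gidHand}, every $\gid$-set of $G_m$ contains $X$; counting pendant vertices gives $\gamma(G_m)=|X|$, hence $\gid(G_m)\ge|X|$; the set $X$ itself fails to be identifying because $I_X(a)=\{x_{a,1}\}=I_X(y_{a,1})$, so $\gid(G_m)\ge|X|+1$; and for each $i$ the set $S_i=X\cup\{v_i\}$ is identifying and dominating (the verification of distinct intersection sets for the $v_j$, for $a$, for $b$, and for the $x$- and $y$-vertices is exactly as before, with the pair $\{x_{i,1}^{(1)},x_{i,1}^{(2)}\}$ still serving to identify $v_i$ among the $v_j$'s — here we use $m\ge 2$), so $\gid(G_m)=|X|+1$ and the $\gid$-sets are precisely the ``$X$-plus-one'' sets $S_i$, $1\le i\le n$, since $X\cup\{a\}$ and $X\cup\{b\}$ are again excluded. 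Finally, the adjacency computation in $G_m(\gid)$ is identical: $v_i'v_j'\in E(G_m(\gid))$ iff $S_i$ and $S_j$ differ by a single slide along an edge, which forces that edge to be $v_iv_j\in E(H)$, so $G_m(\gid)\simeq H$.

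Since this reasoning is purely a repetition of the proof of Theorem~\ref{thm:gamma-ID} with $2$ replaced by an arbitrary $m\ge 2$, I would write the proof of the corollary as a short remark rather than a full reproof: state the modified construction, note that Lemmas~\ref{lem:C} and~\ref{lem:gidHand} apply unchanged to each attached copy of $\mathcal{C}$, and point out that every line of the proof of Theorem~\ref{thm:gamma-ID} goes through word for word. There is no real obstacle here; the only point meriting a sentence of care is to confirm that the two copies per vertex were used in Theorem~\ref{thm:gamma-ID} solely to guarantee that the pair $\{x_{i,1},x_{i,1}^{*}\}$ distinguishes $v_i$ from the other $v_j$, so any $m\ge 2$ suffices and taking $m\to\infty$ produces infinitely many pairwise non-isomorphic graphs with $\gid$-graph $H$.
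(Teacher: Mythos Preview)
Your proposal is correct and matches the paper's own argument: the paper proves the corollary by the one-line observation that attaching three or more copies of $\mathcal{C}$ to each vertex of $H$ (instead of two) leaves the proof of Theorem~\ref{thm:gamma-ID} intact and yields infinitely many graphs with $\gid$-graph $H$. Your write-up is more detailed than the paper's, but the idea is identical, including the remark that $m\ge 2$ is exactly what is needed so that the pair $\{x_{j,1}^{(1)},x_{j,1}^{(2)}\}$ uniquely identifies each $v_j$.
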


\section{The $\gamma_{L}$ and $\gamma_{t}^{L}$-graphs}

Introduced by Slater in 1988 \cite{Slater88}, a \emph{locating-dominating set}
$S$ of graph $G=(V,E)$ is a dominating set such that for each $v\in V-S$, the
set $N[v] \cap S$ is unique. In contrast to identifying codes,
locating-dominating sets do not require that the vertices of the dominating
set have unique neighbourhood intersection with the dominating set itself. The
minimum cardinality of a locating-dominating set, denoted by $\gamma_{L}(G)$,
is the \emph{locating-dominating number} of a graph $G$. A \emph{$\gamma_{L}%
$-set} of a graph is a minimum locating-dominating vertex subset. Since all
identifying codes are also locating-dominating sets, it follows that
$\gamma_{L} \leq\gid$ for all graphs. We reuse the notation of the
intersection set of a vertex $v$ and set $S$ from ID-codes; however, for
locating-domination, the sets $I_{S}(v)$ need only be unique for $v\notin S$.

We define the \emph{$\gamma_{L}$-graph of a graph $G$}, $G(\gamma
_{L})=(V(\gamma_{L}),E(\gamma_{L}))$, to the be graph where the vertex set
$V(\gamma_{L})$ is the collection of $\gamma_{L}$-sets of $G$. As with the
$\gamma$-graph, $u,w \in V(\gamma_{L})$ associated with $\gamma_{L}$-sets
$S_{u}$ and $S_{w}$ are adjacent in $G(\gamma_{L})$ if and only if there exist
$v_{u} \in S_{u}$ and $v_{w} \in S_{w}$ with $v_{u}v_{w}\in E(G)$, such that
$S_{u} = (S_{w} - \{v_{w}\}) \cup\{v_{u}\}$.

Given the similarities between ID-codes and locating-dominating sets, it is
not surprising that a similar result to Theorem \ref{thm:gamma-ID} exists for
$\gamma_{L}$-graphs.

\begin{theorem}
\label{thm:gammaL} Every graph $H$ is the $\gamma_{L}$-graph of infinitely
many graphs.
\end{theorem}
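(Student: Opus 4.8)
The plan is to mimic the construction used for Theorem \ref{thm:gamma-ID}, but replacing the gadget $\mathcal{C}$ with a simpler gadget that forces vertices into every $\gamma_L$-set while using the weaker identification requirement (intersection sets need only be distinct for vertices \emph{outside} the set). The key observation is that a single pendant vertex attached to a vertex $x$ forces $x$ or the pendant into any locating-dominating set, and one can arrange a small gadget so that a specific vertex $x$ lies in every $\gamma_L$-set; in fact, for locating-domination a pendant path or a pendant-vertex gadget of the Connelly et al.\ type should suffice, and we likely do not need anything as elaborate as $\mathcal{C}$. First I would isolate a gadget lemma analogous to Lemmas \ref{lem:C} and \ref{lem:gidHand}: attach to $G'$ at a vertex $v$ a gadget whose unique (or forced) contribution to any $\gamma_L$-set is a fixed set $X_0$ of ``stem'' vertices, and such that if $S'$ is a $\gamma_L$-set of $G'$ then $S' \cup X_0$ is locating-dominating in $G$.

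Then I would carry out the main construction exactly as before: start with a copy of $H$ on $\{v_1,\dots,v_n\}$, attach one such gadget to each $v_i$ (two copies, as in Theorem \ref{thm:gamma-ID}, to guarantee the $v_i$'s have distinct intersection sets among themselves), add two universal-to-$H$ vertices $a$ and $b$ with their own forced gadgets, and let $X$ be the union of all forced stem vertices. The steps are: (1) show $X$ is dominating but not locating-dominating in $G$, pinning down $\gamma_L(G)$ to $|X|+1$; (2) show each $S_i = \{v_i\}\cup X$ is locating-dominating — here the pair of gadgets on $v_i$ gives each $v_j$ a distinct intersection set, and the single vertex $v_i$ distinguishes $a$ from its pendant and $b$ from its pendant; (3) show $S_a = \{a\}\cup X$ and $S_b = \{b\}\cup X$ fail to be locating (the $b$-side, resp.\ $a$-side, pendant is not distinguished), so that $\{S_1,\dots,S_n\}$ is exactly the set of $\gamma_L$-sets; (4) conclude, as in Theorem \ref{thm:gamma-ID}, that $S_iv_j$-type slide adjacency holds iff $v_iv_j \in E(H)$, hence $G(\gamma_L)\simeq H$. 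Finally, attaching three or more copies of each gadget (or more generally scaling the construction) yields infinitely many such $G$, giving the stated ``infinitely many graphs'' conclusion.

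The main obstacle I anticipate is the gadget lemma itself: since locating-domination does not constrain intersection sets of vertices inside the set, a gadget that works for ID-codes may admit extra $\gamma_L$-sets (indeed the excerpt already flags that $\{v_1,v_3\}$ fails to be a $\gid$-set of $C_4$ while being extendable), so I must choose the gadget carefully — probably a short pendant path $y$--$x$ (or $y$--$x$ with $x$ also carrying a second pendant) attached at $v$ — and verify that in any $\gamma_L$-set of $G$ the stem vertex $x$ (not the leaf $y$) is forced, which requires checking that swapping $x$ for $y$ destroys either domination or a distinctness condition elsewhere. A secondary subtlety is confirming that within $G$ the only ``$X$-plus-one'' locating-dominating sets are the $S_i$, $S_a$, $S_b$: one must rule out putting the extra vertex inside a gadget or at a leaf, which follows because such a choice leaves $a$ and $b$ (or their pendants) unseparated. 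Once the gadget lemma is in place, the rest is a direct transcription of the Theorem \ref{thm:gamma-ID} argument with ``identifying'' weakened to ``locating-dominating.''
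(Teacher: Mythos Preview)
Your plan is correct and matches the paper's approach exactly: replace $\mathcal{C}$ by a small gadget with a \emph{unique} $\gamma_L$-set, then rerun the Theorem~\ref{thm:gamma-ID} construction verbatim and scale the number of attached gadgets to get infinitely many $G$. The paper resolves your ``main obstacle'' by taking the gadget to be the Bull graph $\mathcal{B}$ (a triangle $x_1x_2x_3$ with pendants $y_1,y_2$ at $x_1,x_2$), whose unique $\gamma_L$-set is $\{x_1,x_2\}$; your suggested pendant-path gadgets would not give uniqueness, so this is the one place where you would need to tighten the choice.
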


Since $\mathcal{C}$ does not have a unique $\gamma_{L}$-set (for example,
$\{x_{1},x_{2},x_{3}\}$ and $\{x_{1},x_{2},y_{3}\}$ are $\gamma_{L}$-sets), we
cannot use it in the construction to prove Theorem \ref{thm:gammaL}. Instead,
we use the very similar \emph{Bull graph}, $\bull$, as pictured in Figure
\ref{fig:gLhand}. Notice that $S=\{x_{1},x_{2}\}$ is a $\gamma_{L}$-set in
$\bull$. Moreover, since $S_{1}=\{x_{1}, y_{2}\}$ and $S_{2}=\{y_{1},x_{2}\}$
give $I_{S_{1}}(y_{1})=\{x_{2}\}=I_{S_{1}}(x_{3})$, and $I_{S_{2}}%
(y_{2})=\{x_{2}\}=I_{S_{2}}(x_{3})$, $S$ is the only $\gamma_{L}$-set of
$\bull$. The proof to Theorem \ref{thm:gammaL} then proceeds identically to
that of Theorem \ref{thm:gamma-ID}, substituting the use of $\bull$ for
$\mathcal{C}$.

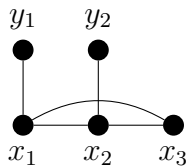
\begin{figure}[H]
\centering
\begin{tikzpicture}				
					
							\foreach \i/\x in {1/1.5, 2/2.5,3/3.5}
								\node [std](x\i) at (\x,1.5)[label=below:$x_\i$] {};	
							
							\draw(x1)--(x2)--(x3);		
							
							\foreach \i/\x in {1/1.5, 2/2.5}
								\node [std](y\i) at (\x,2.5)[label=above:$y_\i$] {};							
								
							\foreach \i/\x in {1/1.5, 2/2.5}
								\draw (y\i)--(x\i);
										
							\draw(x1) to [out=30,in=150](x3);								
						
				\end{tikzpicture}
\caption{The Bull graph $\bull$ used in the construction of Theorem
\ref{thm:gammaL}.}%
\label{fig:gLhand}%
\end{figure}

A variant of locating-domination, a vertex subset $S$ is a
\emph{locating-total dominating set} (LTDS) of a graph $G$ if $S$ a
locating-dominating set and if each vertex in $V(G)$ is adjacent to some
vertex in $S$. The \emph{locating-total domination number} $\gamma_{t}^{L}(G)$
is the minimum cardinality of a LTDS \cite{HHH06}. We define the $\gamma
_{t}^{L}$-graph of a graph $G$ analogously to the $\gamma_{L}$-graph. Since in
the construction of Theorem \ref{thm:gammaL}, there were no independent
vertices in the $\gamma_{L}$-sets, the following corollary is immediate.

\begin{coro}
Any graph $H$ is the $\gamma_{t}^{L}$-graph of infinitely many graphs.
\end{coro}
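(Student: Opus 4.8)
The plan is to reuse the construction from Theorem~\ref{thm:gammaL} verbatim---starting from a copy of $H$, attaching two copies of the Bull graph $\bull$ at each $v_i\in V(H)$, adding the two apex vertices $a,b$ adjacent to every vertex of $H$, and attaching one more copy of $\bull$ at each of $a$ and $b$---and to check that the $\gamma_L$-sets of this graph $G$ are in fact \emph{locating-total} dominating sets, so that the $\gamma_t^L$-graph of $G$ coincides with its $\gamma_L$-graph, which by Theorem~\ref{thm:gammaL} is isomorphic to $H$. First I would recall, from the analysis underlying Theorems~\ref{thm:gamma-ID} and~\ref{thm:gammaL}, the explicit form of a $\gamma_L$-set of $G$: it is $S_i = \{v_i\}\cup X$, where $X$ is the (forced) union, over all attached copies of $\bull$, of the two ``$x$-vertices'' $\{x_{\cdot,1},x_{\cdot,2}\}$ that Lemma~\ref{lem:C}'s Bull-analogue puts into every $\gamma_L$-set. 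Then I would verify the two claims that make the corollary work: (i) $\gamma_t^L(G) = \gamma_L(G)$, i.e.\ no strictly larger set is needed to get total domination; and (ii) every $\gamma_L$-set $S_i$ is already a locating-total dominating set, equivalently $G[S_i]$ has no isolated vertex.

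For (ii), the point flagged in the sentence preceding the corollary is that in the Theorem~\ref{thm:gammaL} construction, the $\gamma_L$-sets contain no independent vertices. Concretely, inside each Bull copy the pair $\{x_{\cdot,1},x_{\cdot,2}\}$ is an edge of $G$ (in $\bull$, $x_1x_2\in E$), so every vertex of $X$ has a neighbour in $X\subseteq S_i$; and the remaining vertex $v_i$ is adjacent in $G$ to $x_{i,1}$ (the attachment vertex of $\bull_i$), which lies in $X\subseteq S_i$. Hence $G[S_i]$ has minimum degree at least $1$, so $S_i$ is a total dominating set, and being already locating-dominating, it is an LTDS. This gives $\gamma_t^L(G)\le |S_i| = \gamma_L(G)$, and since every LTDS is in particular a locating-dominating set we always have $\gamma_t^L(G)\ge\gamma_L(G)$; so (i) follows and the $\gamma_t^L$-sets of $G$ are exactly the $\gamma_L$-sets of $G$, namely $S_1,\dots,S_n$.

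It then remains to observe that the adjacency in $G(\gamma_t^L)$ is defined by exactly the same slide-move rule as in $G(\gamma_L)$, and since the two vertex sets coincide, $G(\gamma_t^L) = G(\gamma_L)\simeq H$ by Theorem~\ref{thm:gammaL}; replacing ``two copies of $\bull$'' by ``$k\ge 2$ copies'' changes nothing in the argument, yielding infinitely many such graphs $G$. I expect the only real point requiring care---the ``main obstacle''---to be double-checking claim (i): one must be sure that there is no \emph{smaller} locating-total dominating set than $\gamma_L(G)$, which is automatic since $\gamma_t^L\ge\gamma_L$ always, and no \emph{other} $\gamma_t^L$-set sneaking in; the latter is handled exactly as in the proof of Theorem~\ref{thm:gammaL}, where $\{a\}\cup X$ and $\{b\}\cup X$ are shown to fail the locating condition, and any non-$X$-based set is ruled out by the pendant-vertex count forcing $X$ into every minimum set. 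So no new work beyond bookkeeping is needed, which is precisely why the statement is phrased as an immediate corollary.
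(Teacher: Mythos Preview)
Your proposal is correct and follows exactly the paper's approach: the paper states the corollary as immediate because in the construction of Theorem~\ref{thm:gammaL} ``there were no independent vertices in the $\gamma_L$-sets,'' and you have spelled out precisely this observation together with the bookkeeping (each $x_{\cdot,1}x_{\cdot,2}$ is an edge of $\bull$, $v_i$ is adjacent to its attachment vertex $x_{i,1}$, and $\gamma_t^L\ge\gamma_L$ forces the $\gamma_t^L$-sets to coincide with the $\gamma_L$-sets). No additional idea is needed beyond what you wrote.
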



\section{The $\Gamma$-graph}

The final domination parameter we examine is $\Gamma(G)$, the
\emph{upper-domination number} of a graph $G$, defined to be the cardinality
of a largest minimal dominating set. The \emph{$\Gamma$-graph of a graph $G$}
and its associated parameters are defined analogously to $G(\gamma)$. Once
again, this variation requires the use of a new gadget: the graph
$\mathcal{Z}$ in Figure \ref{fig:Z}.

\begin{figure}[H]
\centering
\begin{tikzpicture}								
								
						\node [std](z) at (0,0)[label=below:$z$]  {};						
									
						\path (z) ++(0:10 mm) node (x2) [std] {};
						\draw(z)--(x2);	
														
						\foreach \i / \j in {1/90, 3/-90}
							{ \path (x2) ++(\j:8 mm) node (x\i) [std] {};
								\draw(z)--(x\i);
							}							
						
						\draw(x1)--(x3);	
						\draw(x1) to [out=-120,in=120](x3);		
							
						\foreach \i / \j  in {1/30, 2/30, 3/30}
							{
								\path (x\i) ++(0:15 mm) node (y\i) [std] {};
								\path (y\i) ++(\j:5 mm) node (yL\i)  {$y_{\i}$};
								\path (x\i) ++(\j:5 mm) node (xL\i)  {$x_{\i}$};		
								\draw(x\i)--(y\i);									
							}	
							
						\draw(y1)--(y3);	
						\draw(y1) to [out=-120,in=120](y3);		
							
				\end{tikzpicture}
\caption{The graph $\mathcal{Z}$ used in Theorem \ref{thm:bigGamma}.}%
\label{fig:Z}%
\end{figure}
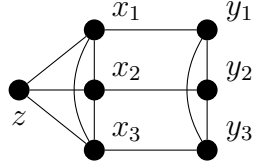

\begin{theorem}
\label{thm:bigGamma} Every graph $H$ is the $\Gamma$-graph of infinitely many graphs.
\end{theorem}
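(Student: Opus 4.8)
The plan is to mimic the construction used in Theorems~\ref{thm:gamma-ID} and~\ref{thm:gammaL}, replacing the ID-forcing gadget $\mathcal{C}$ (resp.\ the Bull graph $\bull$) with the gadget $\mathcal{Z}$ of Figure~\ref{fig:Z}. Given $H$ with $V(H)=\{v_1,\dots,v_n\}$, I would build $G$ by taking a copy of $H$, attaching a fixed number (two, say, for the ``infinitely many'' one attaches $k\ge 2$) of copies of $\mathcal{Z}$ at each $v_i$ via an edge from the ``attachment vertex'' of $\mathcal{Z}$ to $v_i$, adding two universal-to-$V(H)$ vertices $a,b$, and attaching copies of $\mathcal{Z}$ at $a$ and at $b$ as well. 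The key difference from the $\gamma$-graph situation is that now I am looking at \emph{largest} minimal dominating sets, so the gadget must be engineered so that every $\mathcal{Z}$-copy contributes a fixed number of vertices to \emph{every} maximal (in cardinality) minimal dominating set, while the only freedom left is the choice of exactly one vertex among $\{v_1,\dots,v_n,a,b\}$, with $a$ and $b$ excluded by a domination/minimality obstruction exactly as before.

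The first step is a lemma, analogous to Lemmas~\ref{lem:C} and~\ref{lem:gidHand}, pinning down the behaviour of $\mathcal{Z}$: I would compute $\Gamma(\mathcal{Z})$, identify its $\Gamma$-sets, and show that when $\mathcal{Z}$ is attached to an arbitrary graph $G'$ at a vertex $v$, every minimal dominating set of $G$ of maximum cardinality restricted to $V(\mathcal{Z})$ is forced to be one specific set $Z_0$ (or a set of one specific size with a unique extension property), regardless of whether $v$ itself is dominated from inside or outside $\mathcal{Z}$. The structure of $\mathcal{Z}$ — two triangles-with-a-doubled-edge $\{x_1,x_2,x_3\}$ and $\{y_1,y_2,y_3\}$ joined by a matching, with $z$ pendant-ish off the first triangle — is chosen precisely so that the three $y_i$'s can all be taken into the set as their own private neighbours force them out of $N[S-y_i]$, giving a large minimal set, while no competitor of the same size exists; the pendant-like attachment through $z$ insulates this count from the rest of $G$. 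I would verify the private-neighbour (minimality) condition carefully here, since for $\Gamma$ it is minimality, not just domination, that must be preserved along the whole construction.

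Next, with the gadget lemma in hand, I would let $X$ be the union over all gadget copies of the forced sets $Z_0$, show that for each $i$ the set $T_i = X\cup\{v_i\}$ (possibly together with the $y$-type vertices, whatever the lemma dictates) is a minimal dominating set of $G$ of cardinality $\Gamma(G)$, that these are the \emph{only} $\Gamma$-sets (the analogue of the ``$X$-plus-one'' argument: any $\Gamma$-set contains $X$, has exactly one more vertex, that vertex cannot be $a$ or $b$ by the same non-minimality/non-domination obstruction, and cannot be a gadget vertex by the lemma), and finally that $T_i\sim T_j$ in $G(\Gamma)$ under slide-adjacency iff $v_iv_j\in E(G)\iff v_iv_j\in E(H)$. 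Since the $v_i$'s retain their $H$-adjacencies and no new edges among them are created, this last step is immediate, so $G(\Gamma)\simeq H$, and taking three or more copies of $\mathcal{Z}$ per vertex yields infinitely many such $G$.

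The main obstacle I anticipate is the gadget lemma itself: for $\gamma$-graphs the forcing came cheaply from pendant vertices, but for $\Gamma$ one must simultaneously (i) make the gadget's contribution to a \emph{maximum-size} minimal set rigid — so it is not enough that some large minimal set uses $Z_0$, I must rule out equally-large alternatives — and (ii) ensure the minimality (private-neighbour) conditions interact correctly across the attachment edge, so that adding $v_i$ (or $a$, $b$) does not destroy a private neighbour inside a gadget and conversely. Getting $\mathcal{Z}$'s parameters right so that $\Gamma$ of the whole graph is exactly $|X|+1$, with the ``$+1$'' forced to lie in $V(H)$, and verifying the slightly delicate case analysis of whether the attachment vertex is dominated internally or externally, is where the real work lies; the rest is bookkeeping parallel to the proof of Theorem~\ref{thm:gamma-ID}.
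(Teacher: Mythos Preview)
Your outline is in the right spirit --- attach copies of $\mathcal{Z}$, argue that every $\Gamma$-set is ``forced-gadget-part plus one vertex of $V(H)$'', and read off $G(\Gamma)\simeq H$ --- but the actual construction in the paper differs from what you propose in two essential ways, and the step you flag as routine is precisely where the new idea lives.

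First, the paper does \emph{not} introduce universal vertices $a,b$ with their own gadgets. Instead it attaches a single copy $\mathcal{Z}_i$ to each $v_i$ via the edge $v_ix_{i,1}$, and then one further copy $\mathcal{Z}^{*}$ whose vertex $z^{*}$ is joined to \emph{every} $v_i$. Second --- and this is the point you miss --- the contribution of the gadgets to a $\Gamma$-set is \emph{not} uniform: each $\mathcal{Z}_i$ contributes $X_i=\{x_{i,1},x_{i,2},x_{i,3}\}$, while $\mathcal{Z}^{*}$ contributes $Y^{*}=\{y_1^{*},y_2^{*},y_3^{*}\}$. This asymmetry is what makes the construction work: since $Y^{*}$ does not dominate $z^{*}$, the extra vertex $v_i$ acquires $z^{*}$ as its private neighbour, so $S_i=\{v_i\}\cup\big(\bigcup_j X_j\big)\cup Y^{*}$ is genuinely minimal. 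A single fixed $Z_0$ for all gadgets, as you propose, does not give you this private neighbour for free.

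Finally, the ``that vertex cannot be $a$ or $b$ by the same obstruction'' step does not carry over from the $\gid$ proof; there is no identification condition to violate here. The paper's replacement is a private-neighbour \emph{counting} argument (its claim~(iv)): if two or more $v_i$ lie in a minimal dominating set $S$, then each such $v_i$ must have a private neighbour in $V(H)$ or be its own private neighbour, which in either case forces some $x_{j,1}\notin S$ and hence $|S\cap V(\mathcal{Z}_j)|=2$ rather than $3$; summing gives $|S|\le 3n+3<3n+4=\Gamma(G)$. This, together with the per-gadget bound $|S\cap V(\mathcal{Z}_i)|\le 3$, is what pins the $\Gamma$-sets down to exactly the $S_i$. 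Your plan would need an analogous bespoke argument, not the $a,b$ trick.
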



\begin{proof}
\underline{Construction:} The construction of a graph $G$ with $G(\Gamma
)\simeq H$ is similar to the previous results. Begin with a copy of the graph
$H$ with $V(H)=\{v_{1}, v_{2}, \dots, v_{n}\}$, and to each $v_{i}$, attach a
copy of the graph $\mathcal{Z}$ in Figure \ref{fig:Z} labelled $\mathcal{Z}%
_{i}$ with $V(Z_{i})=\{z_{i},x_{i,1},x_{i,2},x_{i,3}, y_{i,1}, y_{i,2},
y_{i,3}\}$ at vertex $x_{i,1}$ to $v_{i}$. Attach a final copy of
$\mathcal{Z}$ labelled $\mathcal{Z^{*}}$ ($V(\mathcal{Z}^{*})=\{z^{*}%
,x_{1}^{*},x_{2}^{*},x_{3}^{*}, y_{1}^{*}, y_{2}^{*}, y_{3}^{*}\}$) by joining
each $v_{i}$ to $z^{*}$.

\noindent For reference, we define $X_{i} = \{x_{i,1}, x_{i,2}, x_{i,3}\}$,
$Y_{i} = \{y_{i,1}, y_{i,2}, y_{i,3}\}$, $X^{*} = \{x_{1}^{*}, x_{2}^{*},
x_{3}^{*}\}$, and $Y^{*} = \{y_{1}^{*}, y_{2}^{*}, y_{3}^{*}\}$.

We claim that the $\Gamma$-sets of $G$ are $S_{1}, S_{2}, \dots, S_{n}$ where
for each $1 \leq i \leq n$,
\begin{align}
S_{i} = \{v_{i}\} \cup\left( \bigcup_{1\leq j \leq n} X_{j} \right)  \cup
Y^{*}.
\end{align}

\begin{figure}
\centering
\begin{tikzpicture}					
							\fill[lgray, dashed] (0,0) circle (1.7cm);
							
							\node [std](v1) at (45:1cm) {};
								\path (v1) ++(100:4 mm) node (x1L) {$v_1$};
							\node [std](v2) at (135:1cm){};
									\path (v2) ++(80:4 mm) node (x2L) {$v_2$};
							\node [std](v3) at (225:1cm) {};
								\path (v3) ++(280:4 mm) node (x3L) {$v_3$};
							\node [std](v4) at (315:1cm){};
									\path (v4) ++(260:4 mm) node (x4L) {$v_4$};
							\draw[mgray](v1)--(v2)--(v3)--(v4)--(v1)--(v3);
							
							\draw[gray, dashed] (0,0) circle (1.7cm);
							\node (H) at (0,-1.4) [color=gray](L){$H$};	
				\path (v3) ++(225:15 mm) node (x31) [std][label=right:$x_{3,1}$] {};								
				\path (v4) ++(-45:15 mm) node (x41) [std][label=left:$x_{4,1}$] {};		
							
							\draw(v3)--(x31);
							\draw(v4)--(x41);
							
					\path (x31) ++(225:5 mm) node (x31C){};
					\fill[lgray] (x31C) circle (5mm);
						\draw[gray] (x31C) circle (5mm){};
						\node[std] at (x31){};
						\node at  (x31C) [color=gray]{$\mathcal{Z}_3$};							
					
					\path (x41) ++(-45:5 mm) node (x41C){};
					\fill[lgray] (x41C) circle (5mm);
						\draw[gray] (x41C) circle (5mm){};
						\node[std] at (x41){};
						\node at  (x41C)[color=gray]{$\mathcal{Z}_4$};
						
						\path (v1) ++(60:15 mm) node (x11) [std] {};
						
						\foreach \i / \j in {2/8, 3/16}
							{ \path (x11) ++(0: \j mm) node (x1\i) [std] {};
								\draw(x11)--(x1\i);								
							}
					
						\path (x12) ++(-90:8 mm) node (z1) [std] [label=below:$z_1$] {};
						\draw (x11)--(z1)--(x13);
						\draw(z1)--(x12);															
							
						\draw(x11)--(x13);	
						\draw(x11) to [out=-30,in=210](x13);		
							
						\foreach \i / \j  in {1/45, 2/45, 3/45}
							{
								\path (x1\i) ++(90:15 mm) node (y1\i) [std] {};
								\path (y1\i) ++(\j:5 mm) node (yL1\i)  {$y_{1,\i}$};							
								\draw(x1\i)--(y1\i);									
							}	
							
						\foreach \i / \j  in {1/45, 2/45, 3/45}	
							\path (x1\i) ++(\j:5 mm) node (xL1\i)  {$x_{1,\i}$};							
							
						\draw(y11)--(y13);	
						\draw(y11) to [out=-30,in=210](y13);						
						
						\draw(v1)--(x11);

						
						\path (v2) ++(120:15 mm) node (x21) [std] {};
						
						\foreach \i / \j in {2/8, 3/16}
							{ \path (x21) ++(180: \j mm) node (x2\i) [std] {};
								\draw(x21)--(x2\i);								
							}
					
						\path (x22) ++(-90:8 mm) node (z2) [std] [label=below:$z_2$] {};
						\draw (x21)--(z2)--(x23);
						\draw(z2)--(x22);															
							
						\draw(x21)--(x23);	
						\draw(x21) to [out=210,in=-30](x23);		
							
						\foreach \i / \j  in {1/135, 2/135, 3/135}
							{
								\path (x2\i) ++(90:15 mm) node (y2\i) [std] {};
								\path (y2\i) ++(\j:5 mm) node (yL2\i)  {$y_{2,\i}$};							
								\draw(x2\i)--(y2\i);									
							}	
							
						\foreach \i / \j  in {1/135, 2/135, 3/135}	
							\path (x2\i) ++(\j:5 mm) node (xL2\i)  {$x_{2,\i}$};							
							
						\draw(y21)--(y23);	
						\draw(y21) to [out=210,in=-30](y23);		
						\draw(v2)--(x21);							
					
				
							\node [std](z) at (0:25mm)[label=below:$z^*$]  {};		
								\foreach \i in {1, 2, 3, 4}
									\draw(z)--(v\i);								
									
						\path (z) ++(0:10 mm) node (x2) [std] {};
						\draw(z)--(x2);	
														
						\foreach \i / \j in {1/90, 3/-90}
							{ \path (x2) ++(\j:8 mm) node (x\i) [std] {};
								\draw(z)--(x\i);
							}							
						
						\draw(x1)--(x3);	
						\draw(x1) to [out=-120,in=120](x3);		
							
						\foreach \i / \j  in {1/30, 2/30, 3/30}
							{
								\path (x\i) ++(0:15 mm) node (y\i) [std] {};
								\path (y\i) ++(\j:5 mm) node (yL\i)  {$y_{\i}^*$};
								\path (x\i) ++(\j:5 mm) node (xL\i)  {$x_{\i}^*$};		
								\draw(x\i)--(y\i);									
							}	
							
						\draw(y1)--(y3);	
						\draw(y1) to [out=-120,in=120](y3);						
							
				\end{tikzpicture}
\caption{The graph $G$ constructed from $H$ in Theorem \ref{thm:bigGamma}.}%
\label{fig:bigGamma}%
\end{figure}
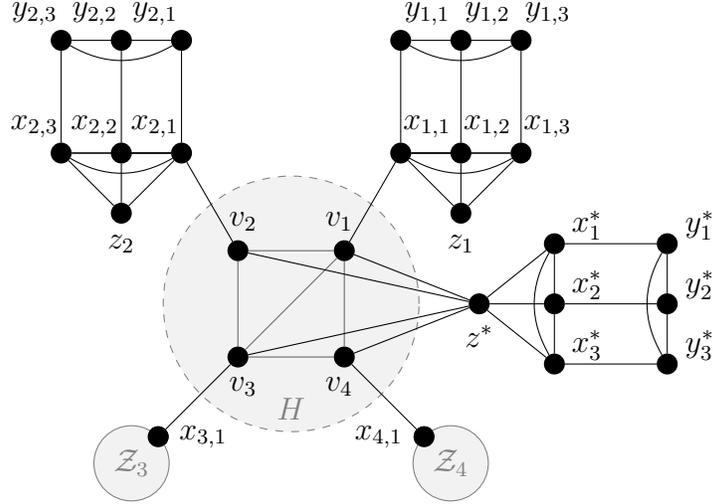

\noindent To begin, notice that $S_{i}$ is minimal dominating with $|S_{i}| =
3(n+1)+1=3n+4$; for each $1 \leq j \leq3$, $p[x_{i,j},S_{i}] =\{x_{i,j}\}$,
$pn[y_{j}^{*},S_{i}] = \{x_{1}^{*}\}$, and $pn[v_{i},S_{i}] = \{z^{*}\}$. We
proceed with a series of claims to demonstrate that the collection
$\mathcal{S}=\{S_{1}, S_{2},\dots,S_{n}\}$ contains the only $\Gamma$-sets of
$G$.

\begin{enumerate}
[(i)] \setlength{\parindent}{1pt} \setlength{\parskip}{1pt}

\item \emph{If $S$ is a minimal dominating set, then for each $1 \leq i \leq
n$ and $1 \leq j \leq3$, $\{z_{i}, x_{i,j}\} \not \subseteq S$. Likewise,
$\{z^{*}, x_{j}^{*}\} \not \subseteq S$}\newline Since $N[z_{i}] \subseteq
N[x_{i,j}]$, either $X_{i} \cap S = \varnothing$, or one of the $x_{i,j}$
annihilates the private neighbourhood of $z_{i}$ in $S$.

\item \emph{If $S$ is a minimal dominating set, and $x_{i,1} \notin S$, then
$|S\cap V(\mathcal{Z}_{i})| = 2$.}\newline If $z_{i} \in S$, then by (i),
$X_{i} \cap S = \varnothing$. To dominate $Y_{i}$ minimally, exactly one
$y_{i,j} \in S$, and thus $|V(\mathcal{Z}_{i}) \cap S|=2$. Suppose instead
that $z_{i} \notin S$ and that $z_{i}$ is externally dominated. Then $|X_{i}
\cap S| \geq1$, say without loss of generality, $x_{i,2} \in S$. To dominate
$y_{i,1}$ minimally, some $y_{i,k}$ is also in $S$. Then $\mathcal{Z}_{i}$ is
dominated by $\{x_{i,2}, y_{i,k}\}$ and again $|V(\mathcal{Z}_{i}) \cap S|=2$.

\item \emph{If $S$ is a minimal dominating set, then for $1\leq i\leq n$,
$|S\cap V(\mathcal{Z}_{i})|\leq3$, and $|S\cap V(\mathcal{Z}^{\ast})|\leq3$}.
\newline As in (ii), if some $x_{i,j}\in S$ and some $y_{i,k}\in S$, then
$|V(\mathcal{Z}_{i})\cap S|=2$. Thus, the largest intersection of
$\mathcal{Z}_{i}$ and $S$ occurs when $z_{i}\notin S$ and $S\cap
Y_{i}=\varnothing$; that is, when $V(\mathcal{Z}_{i})\cap S=X_{i}$.

\item \emph{If $S$ is a $\Gamma$-set of $G$, then $|V(H)\cap S|\leq1$%
.}\newline Suppose to the contrary that $|V(H)\cap S|=m\geq2$; say without
loss of generality that $v_{1},...,v_{m}\in S$. For each $i=1,...,m$,
$z^{\ast}\notin pn[v_{i},S]$. Since $z_{i}$ is dominated (by a vertex in
$\{z_{i},x_{i,1},x_{i,2},x_{i,3}\}\cap S$), $x_{i,1}\notin pn[v_{i},S]$. Hence
either $v_{i}\in pn[v_{i},S]$ or $v_{j}\in pn[v_{i},S]$ for some $j>m$. In the
former case, $x_{i,1}\notin S$ and $|V(\mathcal{Z}_{i})\cap S|=2$, and in the
latter case, $x_{j,1}\notin S$ and $|V(\mathcal{Z}_{j})\cap S|=2$. Thus, for
each $i\in\{1,...,m\}$ there exists a unique $j$ such that $x_{j,1}\notin S$.
By (ii), then, for each $i\in\{1,...,m\}$ there exists a unique $j$ such that
$|S\cap V(\mathcal{Z}_{j})|=2$. Hence $|S\cap(V(H)\cup(\bigcup_{i=1}%
^{n}\mathcal{Z}_{i}))|\leq3n$. By (iii), $|S\cap V(\mathcal{Z}^{\ast})|\leq3$.
Hence $|S|\leq3n+3<\Gamma(G)$, a contradiction.
\end{enumerate}

\noindent From (i)-(iv), $\mathcal{S}$ consists of all the $\Gamma$-sets of
$G$. The proof proceeds as in Theorem \ref{thm:gamma-ID}. To construct other
graphs with a $\Gamma$-graph of $H$, attach additional copies of $\mathcal{Z}$
to any vertex of $V(H)$.
\end{proof}



\section{Open problems}

We concluded with a few open problems for future consideration.

\begin{enumerate}
\setlength{\parindent}{1pt} \setlength{\parskip}{1pt} 

\item Determine conditions on the graph $G$ under which each $\gamma$-graph
variation is connected/disconnected. 

\item \emph{Reconfiguration problems} are a well-studied class of problems
which examine the step-by-step transformation from one feasible solution to
another, where feasibility is maintained at each intermediate step (see
\cite{IDHPSUU11}). These problems are often represented as
\emph{reconfiguration graphs}, where each vertex represents a feasible
solution. As they represent the movement from one $\gamma$-set to another with
each intermediate step also being a $\gamma$-set, $\gamma$-graphs and the
variations presented in this paper are reconfiguration graphs. In the context
of graph problems where the vertices of the reconfiguration graph
$\mathcal{G}$ represent vertex subsets of a graph $G$, a vertex $v\in V(G)$ in
a solution set $S$ is said to be \emph{stuck} if in $\mathcal{G}$, each
neighbor $v_{S^{\prime}}$ of the vertex $v_{S}$ corresponding to $S$ in $G$
has $v\in S^{\prime}$. A vertex $v\in S$ is \emph{frozen} if all vertices in
the same component of $\mathcal{G}$ as $v_{S}$ correspond to sets also
containing $v$. Under what conditions is a vertex stuck or frozen in each of
the $\gamma$-graph variations? Moreover, when is $v$ in every $\gamma$-graph
variation?

\item Let $\pi$ be any of the above-mentioned domination-related parameters.
Is it true that every \textbf{bipartite} graph is the $\pi$-graph of a
\textbf{bipartite }graph?

\item Study the nature of $i$-graphs, $\operatorname{IR}$-graphs, and $\alpha
$-graphs, where $\operatorname{IR}(G)$ is the upper irredundance number and
$i(G)$ and $\alpha(G)$ are the independent domination and independence numbers
of $G$, respectively. (Depending on $H$, the graph $G$ constructed in the
proof of Theorem \ref{thm:bigGamma} could have more $\operatorname{IR}$-sets
than the order of $H$, and possibly also $\operatorname{IR}(G)>\Gamma(G)$.) In
particular, determine whether all graphs are $i$-graphs, $\alpha$-graphs, or
$\operatorname{IR}$-graphs. Consider other domination variations like Roman
and Italian domination.
\end{enumerate}

\subsection*{Acknowledgement}

The authors wish to thank Rick Brewster for many helpful comments and suggestions.


\end{document}